\numberwithin{theorem}{section}
\newcommand{\TheTitle}{Minimax state estimates for abstract Neumann problems}
\newcommand{\ShortTitle}{Estimates for Neumann problems}
\newcommand{\TheAuthors}{Olexander Nakonechnii, Sergiy Zhuk}
\headers{\ShortTitle}{\TheAuthors}
\title{{\TheTitle}\thanks{Published in MTA, 2018
}}
\author{Sergiy Zhuk\thanks{IBM Research, Dublin, Ireland, \texttt{\{sergiy.zhuk\}@ie.ibm.com}} \and Olexander Nakonechnii\thanks{T.Shevchenko University of Kyiv, Kyiv, Ukraine, \texttt{nakonechniy@unicyb.kiev.ua}}
}
\begin{document}
\maketitle
\begin{abstract}
The paper presents analytic expressions of minimax (worst-case) estimates for solutions of linear abstract Neumann problems in Hilbert space with uncertain (not necessarily bounded!) inputs and boundary conditions given incomplete observations with stochastic noise. The latter is assumed to have uncertain but bounded correlation operator. It is demonstrated that the minimax estimate is asymptotically exact under mild assumptions on the observation operator and bounding sets. A relationship between the proposed estimates and a robust pseudo-inversion of compact operators is revealed. This relationship is illustrated by an academic numerical example: homogeneous Neumann problem for Poisson equation in two spatial dimensions.
\end{abstract}

\begin{keywords} minimax, filtering, linear operator equations, Neumann problems, pseudo-inversion, differentiation
\end{keywords}

\begin{AMS}
93C05; 93C20; 93C25; 93C41; 93E10; 93B03; 93B07
\end{AMS}

\section{Introduction}
\label{sec:introduction}


In this paper we propose a generalisation of the minimax\footnote{Minimax estimation is also reffered to as set-membership or worst-case estimation}) state estimation framework for the case of abstract Neumann problems. More specifically, we study existence, uniqueness and provide various representations of the minimax estimates of the states (solutions) $\varphi$ of a linear operator equation $N\varphi = B_1 f_1$, $\delta\varphi=B_0f_0$ with uncertain ``boundary condition'' $f_0$ and uncertain input $f_1$ from a given bounding set $G$. Operators $N$ and $\delta$ are associated with the abstract Neumann problem: they could, for instance, represent an elliptic operator and the normal derivative of $\varphi$. We refer the reader to~\cite{Aubin1972} which provides a comprehensive overview of the abstract linear Neumann problems.\\
The minimax state estimation problem can be formulated as follows: given incomplete and noisy observations of the state, $ y(t) = C(t) \varphi + \eta(t)$, and a linear operator $V$, find a vector $\widehat{V\varphi}$, the minimax estimate of the vector $V\varphi$, such that
\begin{equation*}
\sup_{G,G_1} E\|V\varphi - \widehat{V\varphi}\|^2=\inf_{L(y)}\sup_{G,G_1} E\|V\varphi - L(y)\|^2
\end{equation*} where $L(y)$ is a linear functional of observations from a given class, and $\eta$ denotes a realisation of a random process with values in some Hilbert space $H$ such that $E\eta(t)=0$ and the correlation operator, $R_\eta$ is unknown and belongs to the given bounding set $G_1$. In other words, the minimax estimate $\widehat{V\varphi}$ of the vector $V\varphi$ has minimal mean-squared worst-case estimation error, corresponding to the worst-case realisation of the tuple $(f_0,f_1,R_\eta)\in G\times G_1$, in some class of linear estimates $L(y)$. This rather intuitive description is made precise in~\cref{sec:problem-statement-1}.

Minimax (set-membership/worst-case) state estimation framework has a long history: many authors studied minimax estimates for ordinary differential equations~\cite{Bertsekas1971,Tempo1985,Kurzhanski1997,Milanese1991}, differential-algebraic equations~\cite{Zhuk2009c,Zhuk2012AMO, SZMP_Automatica17, SZMP_TAC16}, abstract linear equations~\cite{Zhuk2009d,SZAP_IFAC17,Nakonechnii1978} assuming fully deterministic uncertainty description for both uncertain model errors and observation noises. Note that, for linear finite-dimensional dynamics and stochastic uncertainty description, the minimax estimate is equivalent to that of the Kalman filter~\cite{KrenerMinimax}.

In contrast, in this paper we study the case of mixed uncertainty description, i.e. deterministic model errors and stochastic measurement noise, and parameter-dependent observations. Our key contributions are:
\begin{itemize}
\item exact analytical representations of the minimax estimate $\widehat{V\varphi}$ provided $G$ is a convex possibly unbounded set, and $G_1$ is bounded, and $(N,\delta)$ are associated with the abstract Neumann problem (\cref{t:unbG},\cref{t:vector_mest,p:scalar_mest_hatphi})
\item sufficient conditions for the worst-case estimation error to decay to $0$ for $t\to+\infty$ (\cref{c:unboundedG_asymptotic_convergence,p:convergence_boundedG})
\end{itemize}
To the best of our knowledge, the aforementioned results are new and have not appeared in the mathematical literature yet. Let us stress that apart from pure theoretical contribution, these results are of particular significance in practice. Indeed, depending on the interpretation of the parameter $t$, one may get quite interesting applications: for instance, if $t$ is understood as a number of an experiment, $C(t)\equiv K$ is a compact operator of infinite rank, e.g. Volterra operator, and $y(t)$ denotes the experiment's outcome which is corrupted by a realisation of the noise $\eta(t)$, then the minimax estimates $\widehat{V\varphi}(t)$, $\widehat{V\varphi}(t+1)\,,\dots$ approach the solution of the least-squares problem $\|K\varphi - Kx\|\to\min_x$ with the minimal norm. In other words, the minimax estimation in this specific case implements an iterative pseudoinversion\footnote{$K^+$ denotes the Moore-Penrose pseudoinverse of $K$.} procedure converging to $K^+K\varphi$. Note that, in this example, $t$ can be interpreted as a number of the iteration. The operator $N$ plays role of a pre-conditioner which can significantly speed up the convergence. We stress that the aforementioned procedure operates subject to the noise process $\eta$, hence the minimax estimation combines the outcomes $y(t)$ of different experiments so that the pseudoinversion is stitched together with de-noising, and provides an estimate of the ``true'' solution $\varphi$ which is robust to uncertain $f_0,f_1$ and $R_\eta$. If $K$ was of Volterra type, i.e. $K\varphi(x) = \int_0^x \varphi(z)dz$, then the minimax estimator would simply differentiate $y$, i.e. approximate $\partial_z y(z,t)$ given a number of noisy measurements of $y(\cdot,t)$. Differentiators of such type are of particular importance in the sliding mode control~\cite{ZhukPolyakov2015_chapter}. We refer the reader to~\cref{sec:numer-exper} for the further details on this.\\
On the other hand, the case of mixed uncertainty description, i.e. deterministic model errors and stochastic measurement noise is quite common in practise. For instance, engineers usually work with approximate physical models, i.e. Galerkin projections of PDEs~\cite{ZhukSISC13}, and have little or no information about statistics of the model errors $f_1$ and $f_0$, which usually absorb discretization errors and physical simplifications, hence it is reasonable to assume deterministic uncertainty descriptions for $f_0$ and $f_1$ in the form of a bounding set $G$. In contrast, the measuring device described by $C$ may have a rather good description of the error statistics, and hence it makes sense to assume that the noise in observations is random. Moreover, the assumption that the correlation operator of the noise is not known but belongs to the given bounding set $G_1$ makes the state estimation robust with respect to fluctuations in the second moments.

One of the key applications of the minimax estimation framework is in data assimilation, an important component in many modern industrial cyber-physical systems which improves the accuracy of forecasts provided by physical models by optimally combining their states -- \emph{a priori} knowledge encoded in equations of mathematical physics -- with \emph{a posteriori} information in the form of sensor data, the so-called state estimation, and evaluates forecast reliability by taking into account uncertainty, i.e., model errors or sensor noise. To name just a few, let us mention online data assimilation of GPS coordinates into a macroscopic traffic model~\cite{ZhukITS14,SZTTABTS_CDC16}, assimilation of water elevation levels into the Saint Venant equations for flood prediction~\cite{ZhukAdvWaterRes2016}, adaptive grids~\cite{SZHvanB_SISC16}, robust source estimation~\cite{SZTTRSSM_SISC16} and air quality monitoring~\cite{MalletZhukJGR13}. We refer the reader to~\cite{ReichDA2015,StuartDA2015} for the further discussion on modern data assimilation. Another notable application of minimax estimation is in sliding mode control~\cite{ZhukPolyakov2015_chapter,SZAP_TAC16}.

This paper is organized as follows. The next section presents a brief overview of abstract Dirichlet/Neumann problems. \cref{sec:problem-statement-1} presents the formal problem statement; the minimax state estimates are shown in section~\ref{sec:main_results}. Application to pseudoinversion together with the numerical experiment is described in section~\ref{sec:numer-exper}. Section~\ref{sec:conclusion} contains conclusions.\\

\textbf{Notation.} Given an abstract Hilbert space $H$ we denote by $(\cdot,\cdot)_H$ its canonical inner product with values in $R$, and set $\|x\|^2_H:=(x,x)$ for any $x\in H$. $[x,y]$ denotes an element of $X\times Y$, the Cartesian product of two Hilbert spaces $X$ and $Y$. We also define a space of all linear continuous operators from a Hilbert space $H_1$ to Hilbert space $H_2$ by $\mathscr{L}(H_1,H_2)$, $H_1^\star$ denotes the adjoint space of $H_1$, $\Lambda_{H_1}$ denotes the canonical isomorphism of the Hilbert space $H_1$ onto $H_1^\star$, $I$ denotes the identity operator. $\langle\cdot,\cdot\rangle$ denotes the duality pairing between $H$ and its adjoint space $H^\star$. 

\section{Abstract Dirichlet/Neumann problem}
\label{sec:euler}
Assume that $H_0$, $H_{-}$ and $H_+$ are given Hilbert spaces such that \begin{equation*}
H_+\subseteq H_0\subseteq H_{-}\,.
\end{equation*} In what follows, we identify $H_0$ with its adjoint space $H_0^\star$. Let $(\cdot,\cdot)_{0,-,+}$ denote inner products in $H_{0,-,+}$ respectively, and let $a$ be a continuous bilinear form on $H_+\times H_+$ such that
\begin{equation}
  \label{eq:coercive_form}
\exists\alpha>0:\quad a(\phi,\phi)\ge\alpha^2 (\phi,\phi)_+\,,\quad \forall \phi\in H_+\,,\alpha\ne0\,.
\end{equation}
Let $H_\partial$ denote a Hilbert space and consider a linear operator $\gamma\in\mathscr{L}(H_+,H_\partial)$ such that
\begin{itemize}
\item $\gamma(H_+)=H_\partial$
\item \(
\mathring{H}_+:=\operatorname{ker}(\gamma) = \{\phi\in H_+:\gamma\phi=0\}
\) is dense in $H_+$
\end{itemize}
Define $\mathring{H}_{-}:=\mathring{H}_+^\star$. Then, clearly, \begin{equation*}
\mathring{H}_+\subseteq H_0\subseteq \mathring{H}_{-}\,.
\end{equation*} Define a linear operator \begin{equation*}
\forall\phi\in H_+ :\quad N\phi:=a(\phi,\cdot)\in \mathring{H}_{-}\,,
\end{equation*} i.e. $N$ maps a vector $\phi\in \mathring{H}_+$ to a linear continuous functional $\psi\mapsto \ell_\phi = a(\phi,\psi)$ over $\mathring{H}_+$, so that $\ell_\phi\in \mathring{H}_{-} = \mathring{H}_+^\star$. The operator $N$ is a bounded linear operator in the Hilbert space \(
H_+(N):=\{\phi\in H_+: N\phi\in H_0\}
\) equipped with the graph norm $\|\phi\|_{H_+(N)}:=(\|\phi\|^2_++\|N\phi\|^2_0)^\frac 12$. It is not hard to see, by definition of $N$, that \begin{equation*}
N\in\mathscr{L}(H_+,\mathring{H}_{-})\cap  \mathscr{L}(H_+(N),H_{0})\,.
\end{equation*}
Define $N^+$, the formal adjoint of $N$ as follows: \begin{equation*}
\forall\psi\in H_+ :\quad N^+\psi:=a(\cdot,\psi)\in \mathring{H}_{-}\,.
\end{equation*}
Clearly, $N^+\in\mathscr{L}(H_+,\mathring{H}_{-})\cap\mathscr{L}(H_+(N^+), H_0)$ where $H_+(N^+):=\{\phi\in H_+: N^+\phi\in H_0\}$. If the form $a$ is symmetric, i.e. $a(\phi,\psi)=a(\psi,\phi)$, then $N=N^+$.

\emph{The abstract Dirichlet problem} associated with the form $a$ is to find $\varphi\in H_+(N)$ such that:
\begin{equation}
  \label{eq:dirichlet}
N\varphi = f\,, \quad\gamma\varphi = f_0\,, \quad f_0\in H_\partial\,, f\in H_0\,.
\end{equation}
We stress that there exist a linear bounded operator $\delta\in\mathscr{L}(H_+(N),H_\partial^\star)$ such that the following Green formula holds true:
\begin{equation}\label{eq:green1}
a(\phi,\psi) = (N\phi,\psi)_0+ \langle\delta\phi,\gamma\psi\rangle\,,\quad \phi\in H_+(N)\,, \psi\in H_+\,,
\end{equation}
and the formal adjoint of $N$, $N^+$ verifies the following equality:
\begin{equation}\label{eq:Nadjoint}
(N^+\psi, \phi)_0 - (\psi, N\phi)_0 = \langle \gamma\psi, \delta\phi\rangle- \langle\delta^+\psi,\gamma\phi\rangle\,, \phi\in H_+(N)\,,\psi\in H_+(N^+)\,,
\end{equation}
where $\delta^+\in \mathscr{L}(H_+,H^\star_\partial)$ is such that \begin{equation*}
a(\psi,\phi) = (N^+\phi,\psi)_0 + \langle\delta^+\phi,\gamma\psi\rangle\,,\quad \phi\in H_+\,, \psi\in H_+(N^+)\,.
\end{equation*}
\emph{The abstract Neumann problem} associated with the form $a$ is to find $\phi\in H_+(N)$ such that:
\begin{equation}
  \label{eq:neumann}
N\phi = f\,, \quad\delta\phi = f_0\,, \quad f_0\in H^\star_\partial\,, f\in H_0\,.
\end{equation}
This latter problem can be equivalently reformulated in a variational form~\cite{Aubin1972}, namely $\phi\in H_+(N)$ solves~\cref{eq:neumann} if and only if
\begin{equation}
  \label{eq:neumann_var}
a(\phi,\psi) = (f,\psi)_0+ \langle f_0,\gamma\psi\rangle \,, \quad \forall \psi\in H_+\,.
\end{equation}
We stress that either~\cref{eq:dirichlet} or~\cref{eq:neumann} have the unique solution provided~\cref{eq:coercive_form} holds true~\cite{Aubin1972}. The aforementioned abstract Dirichlet/Neumann problems incapsulate a wide class of mixed boundary value problems for linear elliptic equations over Lipchitz domains. A specific example of the abstract Neumann problem will be given below, in the section~\ref{sec:numer-exper}. We refer the reader to~\cite{Aubin1972} for the further details on abstract Dirichlet/Neumann problems.

\section{Problem statement}
\label{sec:problem-statement-1}
Assume that we observe a vector-function $y(t)$ with values in a Hilbert space $H$ such that
\begin{equation}
  \label{eq:obs}
  y(t) = C(t) \varphi + \eta(t)\,,
\end{equation}
where
\begin{itemize}
\item $t\mapsto C(t)\in\mathscr{L}(H_0,H)$ is a given linear transformation, an abstract model of a measuring device
\item $\eta(t)$ denotes a realisation of a random process with values in $H$ such that $E\eta(t)=0$ and $\int_0^T E\|\eta(t)\|^2_H dt<+\infty$, i.e. the process has zero mean and finite second moments; moreover, the correlation operator $(R_\eta(t,s)x_1,x_2)_{H}:=E(\eta(t),x_1)_H(\eta(s),x_2)_{H}$ is unknown and belongs to the given bounding set $G_1$

\item $\varphi\in H_+(N)$ solves\footnote{As noted above,~\cref{eq:coercive_form} implies that~\cref{eq:state} has the unique solution $\varphi$ for any $f_1$ and $f_0$.} the following abstract Neumann problem (in the operator form)
\begin{equation}
  \label{eq:state}
  N\varphi = B_1 f_1\,\quad \delta\varphi=B_0f_0\,,
\end{equation}
subject to uncertain disturbances $f_{0,1}\in F_{0,1}$ from a given bounding set $G$, and $B_0\in\mathscr{L}(F_0,H_\partial^\star )$ and $B_1\in\mathscr{L}(F_1,H_0)$ are given linear operators.
\end{itemize}
Let us now introduce the notion of the minimax estimate. Given $y(t)$ defined as in~\cref{eq:obs}, a vector $c\in H_V$ and linear bounded operators $V\in \mathscr{L}(H_0,H_V)$ and $U(t)\in\mathscr{L}(H,H_V)$ we say that an affine functional $y(\cdot)\mapsto U(y)(t):=\int_0^T U(t) y(t) dt + c$ is an estimate of $V\varphi$. Now, the minimax estimate of $V\varphi$ is defined as follows:
\begin{definition}\label{d:mest}
Given $U\in L^2(0,T,\mathscr{L}(H,H_V))$ and $c\in H_V$ we define the worst-case estimation error $\sigma(U,c)$ associated with $U$ as follows: \begin{equation*}
\sigma^2(U,c):= \sup_{[f_0,f_1]\in G, R_\eta\in G_1} E\| V\varphi- \int_0^T U(t) y(t) dt-c\|_{H_V}^2\,.
\end{equation*}
$\widehat{V\varphi}=\int_0^T \hat U(t) y(t) dt+\hat c$ is said to be a \emph{minimax estimate} of $V\varphi$ provided \begin{equation*}
\inf_{c,U\in L^2(0,T,\mathscr{L}(H))} \sigma(U,c) = \sigma(\hat U,\hat c):=\hat\sigma\,.
\end{equation*}
The number $\hat\sigma$ is called the \emph{minimax error}.
\end{definition}
Note that the minimax estimate $\widehat{V\varphi}$ of $V\varphi$, defined according to~\cref{d:mest}, has the following property: given an operator-valued function $t\mapsto U(t)$ we can compute the worst-case estimation error $\sigma(U)$, associated to the corresponding estimate $U(y)=\int_0^T U(t) y(t) dt + c$, by evaluating the mean-squared distance $E\| V\varphi- \int_0^T U(t) y(t) dt\|_{H_V}^2$ between all values of $U(y)$ and $V\varphi$ generated when $(f_0,f_1)$ runs through $G$, and the correlation function of the noise process runs through $G_1$; the minimax estimate is then defined as a vector $\widehat{V\varphi}$ having the minimal worst-case error $\hat\sigma$. \textbf{The aim of this paper} is to study existence, uniqueness and various representations of the minimax estimates.

\section{Main results}
\label{sec:main_results}
This section presents existence and uniqueness theorems for the minimax estimates for the case of generic convex bounding sets. The case of ellipsoidal bounding sets is considered in details and the corresponding minimax estimates are defined as the unique solution of an abstract boundary-value problem. Finally, asymptotic behaviour of the estimates is studied, namely sufficient conditions for the estimation error to converge to $0$ are formulated.

\subsection{Existence and uniqueness of the minimax estimates}
\label{sec:exist-uniq-minim}

In this section we present existence and uniqueness results. Take $\ell\in H_V$ and let $z\in H_+(N^+)$ denote the unique solution of the following adjoint equation in the variational form:
\begin{equation}
  \label{eq:adjoint_var}
a(\psi,z) = ( \ell,V\psi)_{H_V} - \int_0^T (\ell,U(t)C(t)\psi)_{H_V}  dt \,, \quad \forall \psi\in H_+(N)\,.
\end{equation}
Recall that we identified $H_0$ with $H^\star_0$, and so $V^\star\in \mathscr L(H_V^\star, H_0)$. Clearly, $z$ solves the equivalent operator equation:
\begin{equation}
  \label{eq:adjoint_op}
N^+ z = V^\star \Lambda \ell - \int_0^T C^\star(t) U^\star(t) \Lambda\ell dt\,,\quad \delta^+z=0\,,
\end{equation}
where $\Lambda$ denotes the canonical isomorphism of $H_V$ onto $H_V^\star$. Define \begin{equation*}
\alpha(U,f_0,f_1,\ell):= (B_1 f_1,z)_{0} +\langle B_0f_0,\gamma z\rangle\,, \mathscr D_\alpha:=\{U: \sup_{\ell:\|\ell\|_{H_V}=1, [f_0,f_1]\in G_0}\alpha(U,f_0,f_1,\ell)<+\infty\}
\end{equation*} The following lemma provides means for computing the worst-case error defined in~\cref{d:mest}.
\begin{proposition} \label{p:1}
Let $\varphi$ solve~\cref{eq:neumann_var} and $z$ solve~\cref{eq:adjoint_var} for some  $U\in L^2(0,T,\mathscr{L}(H,H_V))$ and $c\in H_V$. Let us further assume that $G_1$ is bounded, and $G=[\bar f_0,\bar f_1] + G_0$ for a given $[\bar f_0,\bar f_1]\in F_0\times F_1$, and a convex set $G_0$ such that $G_0=(-1)G_0$. If $U\in \mathscr{D}_\alpha$ then
\begin{equation}
  \label{eq:sigmaUc}
  \begin{split}
    \sigma^2(U,c) &= \sup_{\ell:\|\ell\|_{H_V}=1} \sup_{[f_0,f_1]\in G_0} \left(\alpha(U,f_0,f_1,\ell) + |(\ell,c)_{H_V} - \alpha(U,\bar f_0,\bar f_1,\ell)|\right)^2\\
& + \sup_{G_1} E \|\int_0^T U(t)\eta(t) dt \|^2_{H_V}
\end{split}
\end{equation}
and $\sigma^2(U,c)=+\infty$ otherwise.
\end{proposition}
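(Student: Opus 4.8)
The plan is to reduce the computation of $\sigma^2(U,c)$ to a supremum over the unit sphere $\{\ell:\|\ell\|_{H_V}=1\}$ by exploiting the identity $\|x\|_{H_V}=\sup_{\|\ell\|=1}(\ell,x)_{H_V}$ together with the fact that the mean-squared error splits into a deterministic part (governed by $f_0,f_1$) and a stochastic part (governed by $R_\eta$). First I would write out the estimation error $V\varphi - \int_0^T U(t)y(t)\,dt - c$ using $y(t)=C(t)\varphi+\eta(t)$, so that the error becomes $V\varphi - \int_0^T U(t)C(t)\varphi\,dt - c - \int_0^T U(t)\eta(t)\,dt$. Taking the inner product with a unit vector $\ell$ and using the adjoint equation~\cref{eq:adjoint_var} characterising $z$, the deterministic part $(\ell, V\varphi - \int_0^T U(t)C(t)\varphi\,dt)_{H_V}$ collapses to $a(\varphi,z)$, which by the variational form~\cref{eq:neumann_var} of the state equation equals $(B_1f_1,z)_0+\langle B_0f_0,\gamma z\rangle = \alpha(U,f_0,f_1,\ell)$. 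This is the crucial algebraic bridge: it converts the unknown state $\varphi$ into an explicit bilinear expression in the uncertain data and the adjoint solution $z$.

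Next I would use $E\|\cdot\|^2 = \mathrm{var} + (\text{mean})^2$ together with $E\eta=0$ to separate expectation. Since $\eta$ has zero mean and is independent of the deterministic uncertainty, the mean of the error equals the deterministic part $V\varphi-\int_0^T U(t)C(t)\varphi\,dt - c$, and its variance equals $E\|\int_0^T U(t)\eta(t)\,dt\|^2_{H_V}$, which depends only on $R_\eta$. Hence $E\|\text{error}\|^2_{H_V}$ factorises as the squared norm of the deterministic mean error plus the stochastic variance term, and the two suprema over $G=[\bar f_0,\bar f_1]+G_0$ and over $G_1$ decouple. For the deterministic part, writing $[f_0,f_1]=[\bar f_0,\bar f_1]+[g_0,g_1]$ with $[g_0,g_1]\in G_0$ and using linearity of $\alpha$ in its $(f_0,f_1)$ arguments, the mean error paired with $\ell$ becomes $\alpha(U,g_0,g_1,\ell)+\big((\ell,c)_{H_V}-\alpha(U,\bar f_0,\bar f_1,\ell)\big)$. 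Taking the squared norm as a supremum over $\ell$ and then the supremum over $g_0,g_1\in G_0$, the symmetry $G_0=(-1)G_0$ lets me replace the linear term plus a constant by $\sup_{G_0}\alpha + |(\ell,c)-\alpha(U,\bar f_0,\bar f_1,\ell)|$, precisely because $\sup_{g\in G_0}(\alpha(g)+\beta)$ with $G_0$ symmetric gives $\sup_{g}\alpha(g)+|\beta|$; this yields the squared bracket in~\cref{eq:sigmaUc}.

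The main obstacle I anticipate is justifying the interchange of suprema and the passage from $E\|\text{mean error}\|^2$ to $\sup_\ell(\ldots)^2$ cleanly, i.e. verifying that $\sup_{G_0}\sup_{\|\ell\|=1}$ can be reorganised so that the symmetry of $G_0$ produces the absolute value and that no cross terms survive; here I would argue via the Hahn--Banach representation of the norm and the fact that for symmetric $G_0$ the map $[g_0,g_1]\mapsto\alpha(U,g_0,g_1,\ell)$ attains $\pm\sup_{G_0}\alpha$, allowing the constant offset $(\ell,c)-\alpha(U,\bar f_0,\bar f_1,\ell)$ to be absorbed with its sign chosen to maximise. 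The finiteness dichotomy is handled by the definition of $\mathscr D_\alpha$: if $U\in\mathscr D_\alpha$ then $\sup_{\|\ell\|=1,[f_0,f_1]\in G_0}\alpha<+\infty$ and the boundedness of $G_1$ keeps the stochastic term finite, giving~\cref{eq:sigmaUc}; if $U\notin\mathscr D_\alpha$, the deterministic supremum is $+\infty$ and hence $\sigma^2(U,c)=+\infty$. I would close the argument by noting that the $\ell$-supremum and the $G_0$-supremum in the deterministic term commute because the integrand is jointly continuous and convex in the relevant variables.
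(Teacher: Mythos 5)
Your proposal is correct and follows essentially the same route as the paper: split the mean-squared error into a deterministic part plus the noise variance using $E\eta=0$, convert the deterministic error into $\alpha(U,f_0,f_1,\ell)$ via the adjoint equation~\cref{eq:adjoint_var} and the variational form~\cref{eq:neumann_var}, and then use the symmetry $G_0=(-1)G_0$ to absorb the offset $(\ell,c)_{H_V}-\alpha(U,\bar f_0,\bar f_1,\ell)$ as an absolute value (the paper phrases this as $\sup_{k\in[-a,a]}|k-m|=a+|m|$). One small imprecision: your identity $\sup_{g\in G_0}(\alpha(g)+\beta)=\sup_g\alpha(g)+|\beta|$ is only valid for the squared (or absolute-value) quantity, since without the square the supremum is $\sup_g\alpha(g)+\beta$; in context you apply it to the square, so the conclusion stands.
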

\begin{proof}
Recall that $E\eta(t)=0$ and set by definition $q:=V\varphi- \int_0^T U(t) C(t) dt\varphi -c$. Since \begin{equation*}
\|V\varphi- \int_0^T U(t) y(t) dt-c\|_{H_V}=\|q\|^2_{H_V}+2(q,\int_0^T U(t) \eta(t) dt)_{H_V}+
\|\int_0^T U(t) \eta(t) dt\|^2_{H_V}
\end{equation*}
and $E(q,\int_0^T U(t) \eta(t) dt)_{H_V}=0$ it follows that \begin{equation*}
E\|V\varphi - \int_0^T U(t) y(t) dt-c\|^2_{H_V} = \|q\|^2_{H_V} + E \|\int_0^T U(t) \eta(t) dt\|^2_{H_V}\,.
\end{equation*}
We claim that
\begin{equation}
  \label{eq:error}
    E\|V\varphi - \int_0^T U(t) y(t) dt-c\|^2_{H_V} = \sup_{\ell:\|\ell\|_{H_V}=1} \left(\alpha(U,f_0,f_1,\ell)- (\ell,c)_{H_V}\right)^2+ E \|\int_0^T U(t)\eta(t) dt \|^2_{H_V}
\end{equation}
Indeed, note that, by definition of the norm of a Hilbert space one has:
\begin{equation*}
  \begin{split}
    \|q\|^2_{H_V} &= \sup_{\ell:\|\ell\|_{H_V}} \left((\ell, V\varphi)_{H_V} - (\ell, \int_0^T  U(t) C(t) dt\varphi)_{H_V} - (\ell,c)_{H_V}\right)^2 \\
    &\stackrel{\text{by~\cref{eq:adjoint_var} with } \psi=\phi}{=} \sup_{\ell:\|\ell\|_{H_V}=1} \left(a(\varphi,z)  - (\ell,c)_{H_V}\right)^2\\
& \stackrel{\text{by~\cref{eq:neumann_var} with } \phi=z}{=} \sup_{\ell:\|\ell\|_{H_V}=1} \left((Bf_1,z)_0+ \langle B_0f_0,\gamma z\rangle  - (\ell,c)_{H_V}\right)^2
  \end{split}
\end{equation*}
This proves~\cref{eq:error}. To prove~\cref{eq:sigmaUc} take $U\in \mathscr{D}_\alpha$. Then, by~\cref{eq:error} and by~\cref{d:mest} it follows that:\begin{equation*}
\sigma^2(U,c)= \sup_{\ell:\|\ell\|_{H_V}=1} \sup_{[f_0,f_1]\in G_0} \left(
\alpha(U,f_0,f_1,\ell) - ((\ell,c)_{H_V} - \alpha(U,\bar f_0,\bar f_1,\ell))\right)^2 + \sup_{G_1} E \|\int_0^T U(t)\eta(t) dt \|^2_{H_V}\,.
\end{equation*}
Since $G_0=(-1) G_0$ we can write: \begin{equation*}
\alpha(U,f_0,f_1,\ell)\in [-\sup_{[f_0,f_1]\in G_0} \alpha(U,f_0,f_1,\ell), \sup_{[f_0,f_1]\in G_0} \alpha(U,f_0,f_1,\ell) ]
\end{equation*} hence, by a simple equality $\sup_{k\in [-a,a]} | k - m| = a + |m|$ we derive~\cref{eq:sigmaUc}.
Finally, we note that, by definition of $\mathscr D_\alpha$ and by~\cref{eq:sigmaUc} it follows that $\sigma^2(U,c)=+\infty$ if $U\not\in \mathscr{D}_\alpha$.
\end{proof}
Define \begin{equation*}
\Phi(U):=  \sup_{\ell:\|\ell\|_{H_V}=1} \sup_{[f_0,f_1]\in G_0} \alpha^2(U,f_0,f_1,\ell) + \sup_{G_1} E \|\int_0^T U(t)\eta(t) dt \|^2_{H_V}\,.
\end{equation*}
\begin{corollary}\label{c:hatc}
Let $\hat U\in\operatorname{Argmin}\Phi(U)$ and define
\begin{align}
    &N\bar\varphi = B_1 f_1\,, \quad \delta\bar\varphi = B_0f_0\,,  \label{eq:fibar}\\
    &\hat c:= V\bar\varphi - \int_0^T  \hat U(t)C(t)dt \bar\varphi\,.   \label{eq:chat}
\end{align}
It then follows that $\min_{U,c}\sigma^2(U,c)=\sigma^2(\hat U,\hat c)$.
\end{corollary}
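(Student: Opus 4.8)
The plan is to fix an arbitrary admissible $U$, minimise $\sigma^2(U,c)$ over $c$ alone, and show that this partial minimum equals exactly $\Phi(U)$, with the optimal $c$ given by the formula in \cref{eq:chat}. Once this is established, minimising over $U$ is immediate: since the noise term $\sup_{G_1}E\|\int_0^TU(t)\eta(t)\,dt\|^2_{H_V}$ does not depend on $c$, and $\hat U\in\operatorname{Argmin}\Phi$, we obtain $\min_{U,c}\sigma^2(U,c)=\min_U\min_c\sigma^2(U,c)=\min_U\Phi(U)=\Phi(\hat U)=\sigma^2(\hat U,\hat c)$. So the whole corollary reduces to the single identity $\min_c\sigma^2(U,c)=\Phi(U)$ together with the explicit optimiser $c=V\bar\varphi-\int_0^T U(t)C(t)\,dt\,\bar\varphi$.

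To prove that identity I would start from the representation \cref{eq:sigmaUc} of \cref{p:1}. For fixed $\ell$ with $\|\ell\|_{H_V}=1$ the quantity $A:=|(\ell,c)_{H_V}-\alpha(U,\bar f_0,\bar f_1,\ell)|\ge 0$ is independent of $[f_0,f_1]$, while $\alpha(U,f_0,f_1,\ell)$ ranges over a symmetric interval $[-M,M]$, $M:=\sup_{[f_0,f_1]\in G_0}\alpha(U,f_0,f_1,\ell)\ge 0$, because $G_0=(-1)G_0$. Since $M,A\ge 0$, the maximum of $(\alpha+A)^2$ over that interval is $(M+A)^2$, so the inner supremum factorises and
\begin{equation*}
\sigma^2(U,c)=\sup_{\ell:\|\ell\|_{H_V}=1}\Bigl(\sup_{[f_0,f_1]\in G_0}\alpha(U,f_0,f_1,\ell)+|(\ell,c)_{H_V}-\alpha(U,\bar f_0,\bar f_1,\ell)|\Bigr)^2+\sup_{G_1}E\Bigl\|\int_0^T U(t)\eta(t)\,dt\Bigr\|^2_{H_V}.
\end{equation*}
Each $\ell$-summand is bounded below by $\bigl(\sup_{G_0}\alpha(U,\cdot,\ell)\bigr)^2$, with equality precisely when $A=0$; moreover $\bigl(\sup_{G_0}\alpha\bigr)^2=\sup_{G_0}\alpha^2$ again by the symmetry of $G_0$. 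Hence $\sigma^2(U,c)\ge\Phi(U)$ for every $c$, and it remains to exhibit a single $c$ annihilating $A$ simultaneously for all $\ell$.

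The crux — and the step I expect to be the real content — is identifying this $c$ as a genuine linear functional of $\ell$. Let $\bar\varphi$ solve \cref{eq:fibar} with the centre $[\bar f_0,\bar f_1]$, and let $z=z(\ell,U)$ solve the adjoint problem \cref{eq:adjoint_var}. Applying \cref{eq:neumann_var} to $\bar\varphi$ with test function $\psi=z$ gives $\alpha(U,\bar f_0,\bar f_1,\ell)=(B_1\bar f_1,z)_0+\langle B_0\bar f_0,\gamma z\rangle=a(\bar\varphi,z)$, while \cref{eq:adjoint_var} with $\psi=\bar\varphi$ yields $a(\bar\varphi,z)=(\ell,V\bar\varphi)_{H_V}-\int_0^T(\ell,U(t)C(t)\bar\varphi)_{H_V}\,dt=\bigl(\ell,\,V\bar\varphi-\int_0^T U(t)C(t)\,dt\,\bar\varphi\bigr)_{H_V}$. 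Chaining these two variational identities shows $\alpha(U,\bar f_0,\bar f_1,\ell)=(\ell,c_U)_{H_V}$ with $c_U:=V\bar\varphi-\int_0^T U(t)C(t)\,dt\,\bar\varphi$, so the choice $c=c_U$ forces $A=0$ for every $\ell$ at once and delivers $\sigma^2(U,c_U)=\Phi(U)=\min_c\sigma^2(U,c)$.

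The main obstacle is therefore not the optimisation arithmetic but the bilinear bookkeeping in this last paragraph: one must invoke \cref{eq:neumann_var} and \cref{eq:adjoint_var} with the correct pair of test functions ($z$ against the primal form, $\bar\varphi$ against the adjoint form) and use the self-consistency of the Green-type pairing so that $a(\bar\varphi,z)$ is read off in two ways. Specialising $c_U$ to $U=\hat U$ reproduces \cref{eq:chat} verbatim, which closes the argument. I would also remark in passing that this verifies $\hat c$ is optimal for the fixed $\hat U$, so the pair $(\hat U,\hat c)$ is a genuine minimax estimator in the sense of \cref{d:mest}.
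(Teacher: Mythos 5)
Your proposal is correct and follows essentially the same route as the paper's own proof: establish $\sigma^2(U,c)\ge\Phi(U)\ge\Phi(\hat U)$ from \cref{eq:sigmaUc}, then kill the term $|(\ell,c)_{H_V}-\alpha(U,\bar f_0,\bar f_1,\ell)|$ by reading $a(\bar\varphi,z)$ in two ways via \cref{eq:neumann_var} and \cref{eq:adjoint_var}, which identifies $(\ell,\hat c)_{H_V}=\alpha(\hat U,\bar f_0,\bar f_1,\ell)$. Your write-up merely adds detail the paper leaves implicit (the symmetric-interval optimisation and the observation that $c_U$ optimises $c$ for every fixed $U$), so no substantive difference.
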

\begin{proof}
Clearly $\sigma^2(U,c)\ge\Phi(U) \ge\Phi(\hat U)$. Thus $\Phi(\hat U)= +\infty$ implies that $\sigma(U,c)=+\infty$ and the statement of~\cref{c:hatc} holds true. If $\Phi(\hat U)< +\infty$ then, by~\cref{eq:sigmaUc}, we have:
\begin{equation*}
(\ell,\hat c)_{H_V}\stackrel{\text{by~\cref{eq:fibar,eq:adjoint_var}}}{=}a(\bar\varphi,z)\stackrel{\text{by~\cref{eq:chat}}}{=}(B_1\bar f_1,z)_0+\langle B_0\bar f_0, \gamma z\rangle
\end{equation*}
Hence, for any $c\in H_V$ and $U\in L^2(0,T,\mathscr{L}(H,H_V))$ it holds that $\sigma^2(U,c)\ge\Phi(U) \ge\Phi(\hat U)=\sigma^2(\hat U,\hat c)$. This completes the proof.
\end{proof}
\begin{remark}\label{rem:barf0}
Note that as per~\cref{c:hatc} we can restrict our attention to the case $\bar f_1=0$, $\bar f_0=0$ without loss of generality. The interpretation of this fact is quite obvious: since~\cref{eq:state} is linear it follows that one can shift the pre-image of $G$ w.r.t. $N,\delta$ by $\bar\varphi$, and then estimate the deviation of the ``true'' solution $\varphi$ from $\bar\varphi$. The latter is achieved by working with $G_0$ instead of $G$ or, equivalently, by assuming that $\bar f_1=0$, $\bar f_0=0$ and setting $\hat c=0$.
\end{remark}
\subsubsection{Unbounded disturbances}
\label{sec:unbo-dist}

The following theorem provides a representation of the minimax estimate for the case of no information available on $f_0$ and $f_1$, i.e. $G_0=F_0\times F_1$, and an ellipsoidal bounding set for the correlation operator $R_\eta$. Assume that $Q(t)\in\mathscr L(H)$ and $Q(t)=Q^\star(t)\ge\alpha^2 I$, $\alpha\ne0$, and $t\mapsto \|Q(t)\|\in C(0,T)$.
\begin{theorem}\label{t:unbG}
Assume that $G_0=F_0\times F_1$. Then
\begin{equation}
  \label{eq:sigma=supG2}
+\infty>\sigma^2(U,0)=\sup_{G_1}E\|\int_0^T U(t)\eta(t)dt\|^2_{H_V} \Leftrightarrow U\in \mathscr D_1:=\{U:(z,B_1f_1)_0=0\,,\quad \langle B_0f_0,\gamma z\rangle=0\}
\end{equation}
If $\mathscr D_1\ne\varnothing$ and, in addition,
\begin{equation}
  \label{eq:G1}
  G_1=\{R_\eta:\int_0^T E (Q(t) \eta(t),\eta(t))_H dt \le \gamma_T^2\}
\end{equation}
then
\begin{equation}
  \label{eq:minsigmaG1}
\min_U\sigma^2(U,0)=\gamma^2_T\sup_{\ell:\|\ell\|_{V}=1}(\ell,Vp)_{H_V}\,,\quad \widehat{V\varphi}=\int_0^T \hat U(t) y(t)dt = V\hat\varphi
\end{equation}
provided $\hat\varphi$ and $p$ solve the following system of equations:
\begin{equation}
  \label{eq:hatp}
  \begin{split}
  N^+z &= V^\star\Lambda\ell - B_T p\,, \quad \delta^+ z=0\,, B_T = \int_0^T C^\star(t)\Lambda_H Q(t) C(t) dt\,,\\
Np &= B_1\lambda_1\,, \delta p = B_0\lambda_0\,, \quad \langle \gamma z, B_0 f_0\rangle = 0\,, (B_1 f_1, z)_0=0\,, \forall f_0\in F_0\,, f_1\in F_1\,.
  \end{split}
\end{equation}
and
\begin{equation}
  \label{eq:hatphi}
  \begin{split}
N^+\hat p &= \int_0^T C^\star \Lambda_H Q(t) y(t) dt - B_T\hat\varphi\,, \quad\delta^+ \hat p = 0\,, \\
N\hat\varphi &= B_1\mu_1\,, \delta\hat\varphi = B_0\mu_0\,,\quad \langle B_0 f_0,\gamma\hat p\rangle=0\,, (B_1 f_1, \hat p)_0=0\,, \forall f_0\in F_0\,, f_1\in F_1\,.
  \end{split}
\end{equation}
\end{theorem}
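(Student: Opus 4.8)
The plan is to build on \cref{p:1} and \cref{c:hatc}: by \cref{rem:barf0} I may take $\bar f_0=\bar f_1=0$ and $\hat c=0$, so the minimax error equals $\min_U\Phi(U)$, with $\Phi$ the functional defined just after \cref{c:hatc}. For the equivalence in \cref{eq:sigma=supG2} I exploit that $G_0=F_0\times F_1$ is a linear subspace while $(f_0,f_1)\mapsto\alpha(U,f_0,f_1,\ell)=(B_1f_1,z)_0+\langle B_0f_0,\gamma z\rangle$ is linear; hence the inner supremum $\sup_{[f_0,f_1]\in G_0}\alpha^2$ equals $0$ when this functional vanishes identically and $+\infty$ otherwise. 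Taking the supremum over $\ell$, the first term of \cref{eq:sigmaUc} is finite (and then zero) exactly when $(B_1f_1,z)_0=0$ and $\langle B_0f_0,\gamma z\rangle=0$ for all $\ell,f_0,f_1$, i.e. when $U\in\mathscr D_1$. Since $G_1$ is bounded the noise term is always finite, so $\sigma^2(U,0)$ reduces to the noise term on $\mathscr D_1$, which is the first claim.

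Next, for $U\in\mathscr D_1$ I evaluate $\sup_{G_1}E\|\int_0^T U\eta\,dt\|^2$. Writing this quantity and the constraint \cref{eq:G1} as two linear functionals of the correlation operator $R_\eta$, the worst case over the cone of admissible $R_\eta$ is attained on rank-one correlations; the substitution $\zeta=Q^{1/2}\eta$ turns the problem into an operator-norm computation and yields, modulo the canonical identifications, that this supremum equals $\gamma_T^2$ times the largest eigenvalue of $\int_0^T U(t)Q^{-1}(t)U^\star(t)\,dt$. Setting $u:=U^\star\Lambda\ell$ and $J(u):=\int_0^T(Q^{-1}u,u)\,dt$, this reads $\Phi(U)=\gamma_T^2\sup_{\|\ell\|=1}J(U^\star\Lambda\ell)$.

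I then minimise per direction $\ell$. The gain $U$ enters $z$ of \cref{eq:adjoint_op} only through $w:=\int_0^T C^\star(t)u(t)\,dt$, and $U\in\mathscr D_1$ forces the resulting $z$ to be orthogonal to the range of $B_1$ in $H_0$ and to the range of $B_0$ through $\gamma z$. I therefore solve the inner least-weighted-norm problem of minimising $J(u)$ subject to $\int_0^T C^\star u\,dt=w$; stationarity gives the optimal gain $\hat U^\star(t)\Lambda\ell=\Lambda_H Q(t)C(t)p$ for a multiplier $p$, so that $w=B_Tp$ and the adjoint equation becomes the first line of \cref{eq:hatp}. Adjoining the orthogonality constraints with multipliers $\lambda_0\in F_0,\lambda_1\in F_1$ and collecting the remaining stationarity conditions produces $Np=B_1\lambda_1$, $\delta p=B_0\lambda_0$ together with the feasibility relations $(B_1f_1,z)_0=0$, $\langle\gamma z,B_0f_0\rangle=0$; this is precisely system \cref{eq:hatp}, whose solvability follows from the coercivity \cref{eq:coercive_form} and from $\mathscr D_1\ne\varnothing$.

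For the value, I pair $N^+z$ with $p$ using the Green formula \cref{eq:Nadjoint}: with $\delta^+z=0$ and the feasibility conditions one gets $(N^+z,p)_0=0$, whence $(B_Tp,p)_0=(V^\star\Lambda\ell,p)_0=(\ell,Vp)_{H_V}$, so the inner optimum is $J^\star=(\ell,Vp)$. For any $U\in\mathscr D_1$ feasibility gives $J(U^\star\Lambda\ell)\ge J^\star$, hence $\Phi(U)\ge\gamma_T^2\sup_\ell(\ell,Vp)$, while the gain $\hat U$ above, being linear in $\ell$ and thus well defined and feasible, attains equality for every $\ell$; this is \cref{eq:minsigmaG1}. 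Substituting $\hat U$ into $\widehat{V\varphi}=\int_0^T\hat U y\,dt$, the data term $\int_0^T C^\star\Lambda_H Q y\,dt$ takes the role of $V^\star\Lambda\ell$, and with $\hat\varphi$ replacing $p$ and $\hat p$ replacing $z$ one obtains $\widehat{V\varphi}=V\hat\varphi$ with $(\hat\varphi,\hat p)$ solving \cref{eq:hatphi}. \emph{The main obstacle} is this third step: rigorously deriving the optimality system in the infinite-dimensional constrained setting — producing the multipliers $p,\lambda_0,\lambda_1$, verifying that $\hat U$ is simultaneously optimal across all directions $\ell$, and establishing solvability of the coupled boundary-value problems \cref{eq:hatp} and \cref{eq:hatphi}; the rank-one reduction in the worst-case-over-$G_1$ computation is the second delicate point.
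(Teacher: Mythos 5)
Your proposal is correct and follows essentially the same route as the paper: the all-or-nothing dichotomy for $\sup_{G_0}\alpha$ coming from linearity of $\alpha$ on the subspace $F_0\times F_1$, the reduction of the supremum over $G_1$ to rank-one (deterministic) correlations yielding $\gamma_T^2\sup_{\|\ell\|_{H_V}=1}\int_0^T(Q^{-1}\Lambda^{-1}U^\star\Lambda\ell,\Lambda^{-1}U^\star\Lambda\ell)_H\,dt$, and the Lagrange-multiplier/Green-formula derivation of $\hat u=Q(t)C(t)p$ together with system \cref{eq:hatp} and the value $(\ell,Vp)_{H_V}$. The only organizational difference is that the paper first treats the scalar case $H_V=\mathbb{R}$, $V\varphi=(v,\varphi)_0$ explicitly and then closes the $\inf\sup\ge\sup\inf$ gap for general $V$, which is precisely your per-direction-$\ell$ minimization followed by the observation that the optimal gain depends linearly on $\ell$ and is therefore a feasible operator attaining the lower bound.
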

\begin{proof}
  If $U\not\in \mathscr D_1$ then $\alpha(U,f_0,f_1,\ell)>0$ for some $[f_0,f_1]\in G_0$, and hence $\sup_{G_0}\alpha(U,f_0,f_1,\ell)=\sup_{G_0}\left((B_1 f_1,z)_{0} +\langle B_0f_0,\gamma z\rangle\right)=+\infty$. The latter implies that $\sigma^2(U,c)=+\infty$ by~\cref{eq:sigmaUc}. Clearly, $\sigma^2(U,c)=+\infty$ if $\mathscr D_1=\varnothing$. If $\mathscr D_1\ne\varnothing$ and $U\in \mathscr D_1$ we have that $\alpha(U,f_0,f_1,\ell)=0$ and so~\cref{eq:sigma=supG2} holds true.

To prove~\cref{eq:minsigmaG1} let us note that $\sup$ in~\cref{eq:sigma=supG2} is attained at a ``deterministic process'', i.e.
\begin{equation}
  \label{eq:G1G2}
\sup_{G_1}E\|\int_0^T U(t)\eta(t)dt\|^2_{H_V} = \sup_{G_2}\|\int_0^T U(t)g(t)dt\|^2_{H_V}\,,\quad G_2:=\{g: \int_0^T (Q(t)g(t),g(t))_{H_V} dt \}
\end{equation}
Indeed, since for any $\eta\in L^2(0,T,H)$ it holds
\begin{equation}\label{eq:supUeta}
\|\int_0^T U(t)\eta(t)dt\|^2_{H_V}\le \sup_{\ell:\|\ell\|_{H_V}=1} \int_0^T (Q^{-1}\Lambda^{-1} U^\star\Lambda\ell, \Lambda^{-1} U^\star\Lambda\ell)_{H_V}dt \int_0^T (Q(t)\eta(t),\eta(t))_{H_V} dt
\end{equation}
we find that
\begin{equation}
  \label{eq:supG1}
\sup_{G_1}\|\int_0^T U(t)g(t)dt\|^2_{H_V} = \gamma_T^2 \sup_{\ell:\|\ell\|_{H_V}=1} \int_0^T (Q^{-1}\Lambda^{-1} U^\star\Lambda\ell, \Lambda^{-1} U^\star\Lambda\ell)_{H_V}dt\,.
\end{equation}
On the other hand, $E \int_0^T (Q(t)\eta(t),\eta(t))_{H_V} dt = \int_0^T E(Q(t)\eta(t),\eta(t))_{H_V} dt\le \gamma_T^2$, hence, by~\cref{eq:supUeta}, we find that \begin{equation*}
\sup_{G_1}E\|\int_0^T U(t)\eta(t)dt\|^2_{H_V}\le \sup_{G_2}\|\int_0^T U(t)g(t)dt\|^2_{H_V}\,.
\end{equation*}
Now, let $g_k$ denote a sequence of $H_V$-valued functions such that \begin{equation*}
\lim_{k\to\infty} \|\int_0^T U(t)g_k(t)dt\|^2_{H_V} = \sup_{G_2}\|\int_0^T U(t)g(t)dt\|^2_{H_V}\,,
\end{equation*}
and let $\xi$ denote a scalar random variable such that $E\xi=0$ and $E\xi^2=1$. Define $\eta_k(t):=\xi g_k(t)$. Then the correlation operator of $\eta_k$ takes the following form: $(R_k(t,s)x_1,x_2)_H=E(\eta_k(t),x_1)_H (\eta(s),x_2)_H = (g_k(t),x_1)_H(g_k(s),x_2)_H=g_k(t)\otimes g_k(s)$ and \begin{equation*}
\sup_{R_k\in G_1}E\|\int_0^T U(t)\eta_k(t)dt\|^2_{H_V}\le \sup_{G_1}E\|\int_0^T U(t)\eta(t)dt\|^2_{H_V}\le \lim_{k\to\infty} \|\int_0^T U(t)g_k(t)dt\|^2_{H_V} = \sup_{G_2}\|\int_0^T U(t)g(t)dt\|^2_{H_V}
\end{equation*} We get~\cref{eq:G1G2} by noting that $E\|\int_0^T U(t)\eta_k(t)dt\|^2_{H_V} = \|\int_0^T U(t)g_k(t)dt\|^2_{H_V}$.

Let us first prove~\cref{eq:minsigmaG1} for the following special case: $H_V=R$ and $V\varphi = (v,\varphi)_0$, $v\in H_0$. Note that $\Lambda=1$ in this case, $U\in L^2(0,T,H^\star)$ and $V^\star\ell=v$ provided $\|\ell\|_{V_H}=1$. Clearly, $U^\star=u$ for some $u\in L^2(0,T,H)$ such that $\Lambda_H u = U^\star$, $\Lambda_H$ denotes the canonical isomorphism of $H$ onto $H^\star$. By~\cref{eq:G1G2,eq:supUeta} it follows that $\sigma^2(u,0)= \gamma_T^2 \int_0^T (Q^{-1}u, u)_H dt$ is a strictly convex weakly lower semi-continuous functional, and $\lim_{\|u\|_{L^2(0,T,H)}\to+\infty}\sigma^2(u,0)=+\infty$. Hence, $\sigma^2$ has a unique minimum point $\hat u$. To find $\hat u$ recall the definition of $\mathscr D_1$ and define the Lagrange functional \begin{equation*}
\mathcal L(u,\lambda_0,\lambda_1):=\frac12\int_0^T (Q^{-1}u, u)_H dt + (B_1\lambda_1,z)_0 + \langle B_0\lambda_0,\gamma z\rangle\,,
\end{equation*}
provided $z$ solves~\cref{eq:adjoint_op}. The existence of $\hat u$ implies that there exist $\lambda_{0,1}$ such that $\lim_{\tau\to0+}\frac{d\mathcal L}{d\tau}(\hat u+\tau u_1,\lambda_0,\lambda_1)=0$ for any $u_1\in L^2(0,T,H)$. On the other hand
\begin{equation}
  \label{eq:Gateaux_trasformation}
\begin{split}
  &\lim_{\tau\to0+}\frac{d\mathcal L}{d\tau}(\hat u+\tau u_1,\lambda_0,\lambda_1)= \int_0^T (Q^{-1}\hat u, u_1)_H dt + (B_1\lambda_1,z_1)_0 + \langle B_0\lambda_0,\gamma z_1\rangle\\
  & N^+ z_1 = -\int_0^T C^\star \Lambda_H u_1 dt \,, \quad \delta z_1 =0\,.
\end{split}
\end{equation}
Define $p$ as follows: $Np=B_1\lambda_1$, $\delta p = B_0\lambda_0$. Then, by Green's identity~\cref{eq:Nadjoint}, we get: $\lim_{\tau\to0+}\frac{d\mathcal L}{d\tau}(\hat u+\tau u_1,\lambda_0,\lambda_1)= \int_0^T (Q^{-1}\hat u-C(t)p, u_1)_H dt = 0$ for any $u_1\in L^2(0,T,H)$. Hence, $\hat u = Q(t) C(t) p$ and $[p,z]$ solve~\cref{eq:hatp}. To prove~\cref{eq:minsigmaG1} we note that
\begin{equation}
  \label{eq:minsigma_v}
\sigma^2(\hat u,0)= \gamma_T^2 \int_0^T (Q^{-1}\hat u, \hat u)_H dt \stackrel{\text{by~\cref{eq:hatp}}}{=} \gamma_T^2 (B_T p,p)_0 = \gamma_T^2 \left((v,p)_0 - (N^+z,p)_0\right)  \stackrel{\text{by~\cref{eq:Nadjoint,eq:hatp}}}{=} \gamma_T^2 (v,p)_0
\end{equation}
which proves~\eqref{eq:minsigmaG1}. Finally, let us prove that $\widehat{V\varphi} = V\hat\varphi$. Indeed, \cref{eq:hatphi} is a system of the same type as~\cref{eq:hatp} and so it has the unique solution. We have:
\begin{equation*}
  \begin{split}
    \widehat{V\varphi} &= \int_0^T (\hat u(t),y(t))_H dt = \int_0^T (Q(t)C(t)p(t), y(t)) dt = (p, \int_0^T C^\star \Lambda_H Q(t)  y(t)_H dt\\
&= (N^+\hat p + B_T\hat\varphi, p)_0  = (B_T p,\hat\varphi)_0 = (v- N^+z,\hat\varphi)_0 = (v, \hat\varphi)_0
  \end{split}
\end{equation*}
where we used that $(N^+\hat p,p)_0=0$ and $(N^+z,\hat\varphi)_0=0$ by~\cref{eq:Nadjoint,eq:hatp,eq:hatphi}.

Let us now consider the general case. Since
\begin{equation}
  \label{eq:supG1_Voperator}
  \begin{split}
    \sup_{G_1} |\int_0^T U(t) g(t) dt \|^2_{H_V} dt &= \sup_{G_1} \sup_{\|\ell\|_{H_V} = 1} \left(\int_0^T (\Lambda^{-1} U^\star\Lambda \ell, g(t))_{H}dt\right)^2\\
    &= \gamma^2_T\sup_{\|\ell\|_{H_V} = 1}\int_0^T (Q^{-1}\Lambda^{-1} U^\star\Lambda \ell,\Lambda^{-1} U^\star\Lambda \ell)_H dt
  \end{split}
\end{equation}
it follows that
\begin{equation}
  \label{eq:infU_lowbnd}
  \begin{split}
\inf_{U\in\mathscr D_1}\sigma^2(U,0)&\ge \gamma^2_T \sup_{\|\ell\|_{H_V} = 1} \inf_{\tilde u}\{\int_0^T (Q^{-1}\tilde u,\tilde u)_H dt\,, \quad \tilde u=\Lambda^{-1}U^\star\Lambda\ell\,,\quad U\in\mathscr D_1\}\\
&\ge \gamma^2_T \sup_{\|\ell\|_{H_V} = 1} \inf_{u\in L^2(0,T,H)}\int_0^T (Q^{-1} u(t),u(t))_H dt \stackrel{\text{by~\cref{eq:minsigma_v}}}{=} \gamma^2_T \sup_{\|\ell\|_{H_V} = 1} (\ell,Vp)_0
\end{split}
\end{equation}
On the other hand, \(
\gamma^2_T (\ell,Vp)_{H_V} = \sup_{G_2}((\ell,V\varphi)_{H_V} - (\ell,V\hat\varphi)_{H_V})^2
\) and so \begin{equation*}
\gamma^2_T \sup_{\|\ell\|_{H_V} = 1} (\ell,Vp)_{H_V} = \sup_{G_2} \| V\varphi - V\hat\varphi\|^2_{H_V} = \sup_{G_1} E \| V\varphi - V\hat\varphi\|^2_{H_V}
\end{equation*}
Hence, $\sup_{G_1} E \| V\varphi - \widehat{V\varphi}\|^2_{H_V}\ge \sup_{G_1} E \| V\varphi - V\hat\varphi\|^2_{H_V}$, and hence $\widehat{V\varphi}= V\hat\varphi$. This proves~\eqref{eq:minsigmaG1}.
\end{proof}
The next corollary shows that the minimax error approaches $0$ asymptotically provided the observations are available and dominate the noise.
\begin{corollary}\label{c:unboundedG_asymptotic_convergence}
If all the assumptions of~\cref{t:unbG} are true, and, in addition, \begin{equation*}
\lim_{T\to\infty} \frac{\lambda_{min}(B_T)}{\gamma^{2}_T}=+\infty\,,\quad \lambda_{min}(B_T):=\inf_{\psi:\|\psi\|_H=1} (B_T\psi,\psi)_H\,,
\end{equation*} then $\lim_{T\to\infty}\sigma^2(\hat U,0)=0$.
\end{corollary}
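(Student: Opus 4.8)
The plan is to start from the closed-form minimax error supplied by~\cref{t:unbG}, namely $\sigma^2(\hat U,0)=\gamma_T^2\sup_{\|\ell\|_{H_V}=1}(\ell,Vp)_{H_V}$, where for each unit vector $\ell$ the pair $[p,z]$ solves the system~\eqref{eq:hatp}. Since the prefactor $\gamma_T^2$ already appears explicitly, it suffices to prove that the remaining supremum is bounded by a constant multiple of $\lambda_{min}(B_T)^{-1}$, uniformly in $\ell$; the hypothesis $\lambda_{min}(B_T)/\gamma_T^2\to+\infty$ then forces the product to vanish. Thus the whole argument reduces to an a~priori bound on $(\ell,Vp)_{H_V}$ in terms of $\lambda_{min}(B_T)$.

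The first step is to re-derive the identity $(\ell,Vp)_{H_V}=(B_Tp,p)_0$, which is exactly the computation underlying~\eqref{eq:minsigma_v}. I would apply Green's formula~\eqref{eq:Nadjoint} with $\phi=p$, $\psi=z$: because $\delta^+z=0$ and $Np=B_1\lambda_1$, $\delta p=B_0\lambda_0$ by~\eqref{eq:hatp}, the right-hand side collapses to $(z,B_1\lambda_1)_0+\langle\gamma z,B_0\lambda_0\rangle$, which vanishes on account of the constraints $(B_1f_1,z)_0=0$ and $\langle\gamma z,B_0f_0\rangle=0$ built into~\eqref{eq:hatp}. Hence $(N^+z,p)_0=0$, and substituting $N^+z=V^\star\Lambda\ell-B_Tp$ yields $(V^\star\Lambda\ell,p)_0=(B_Tp,p)_0$, i.e. $(\ell,Vp)_{H_V}=(B_Tp,p)_0$.

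It then remains to bound $(B_Tp,p)_0$, and here I would combine two estimates. On one side, since $Q(t)=Q^\star(t)\ge\alpha^2I$, the operator $B_T$ is self-adjoint and nonnegative, so the Rayleigh bound gives $(B_Tp,p)_0\ge\lambda_{min}(B_T)\|p\|_0^2$. On the other side, writing $w:=V^\star\Lambda\ell$ and using the identity together with Cauchy--Schwarz, $(B_Tp,p)_0=(w,p)_0\le\|w\|_0\|p\|_0\le\|V\|\,\|p\|_0$, where $\|w\|_0\le\|V^\star\|\,\|\Lambda\ell\|_{H_V^\star}=\|V\|$ because $\Lambda$ is an isometry and $\|\ell\|_{H_V}=1$. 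Comparing the two gives $\|p\|_0\le\|V\|/\lambda_{min}(B_T)$ and therefore $(\ell,Vp)_{H_V}=(B_Tp,p)_0\le\|V\|^2/\lambda_{min}(B_T)$, a bound independent of $\ell$. Taking the supremum and multiplying by $\gamma_T^2$ yields $\sigma^2(\hat U,0)\le\|V\|^2\,\gamma_T^2/\lambda_{min}(B_T)\to0$. The only delicate point is that $B_T$ is a priori merely nonnegative, so the coercivity estimate $(B_Tp,p)_0\ge\lambda_{min}(B_T)\|p\|_0^2$ is vacuous until $\lambda_{min}(B_T)$ becomes strictly positive; this is precisely what the growth hypothesis secures, so the bound is meaningful exactly in the regime $T\to\infty$ that concerns us.
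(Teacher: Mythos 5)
Your proposal is correct and follows essentially the same route as the paper: both start from the representation $\sigma^2(\hat U,0)=\gamma_T^2\sup_{\|\ell\|_{H_V}=1}(\ell,Vp)_{H_V}=\gamma_T^2\sup_\ell(B_Tp,p)_0$ and then play the Rayleigh lower bound $(B_Tp,p)_0\ge\lambda_{min}(B_T)\|p\|_0^2$ against the Cauchy--Schwarz upper bound $(\ell,Vp)_{H_V}\le\|V\|\,\|p\|_0$ to get $\|p\|_0\le\|V\|/\lambda_{min}(B_T)$ and hence $\sigma^2(\hat U,0)\le\|V\|^2\gamma_T^2/\lambda_{min}(B_T)\to0$. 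The only difference is cosmetic: you re-derive the identity $(\ell,Vp)_{H_V}=(B_Tp,p)_0$ from Green's formula~\eqref{eq:Nadjoint} and the constraints in~\eqref{eq:hatp}, whereas the paper simply cites the computation already carried out in~\eqref{eq:minsigma_v}.
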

\begin{proof}
It follows by~\cref{eq:minsigmaG1,eq:hatp} that $\min_U\sigma^2(U,0)=\gamma^2_T\sup_{\ell:\|\ell\|_{V}=1}(\ell,Vp)_{H_V}=\gamma^2_T\sup_{\ell:\|\ell\|_{V}=1}(B_Tp,p)_0$. Since $(B_Tp,p)_0\ge \lambda_{min} (B_T)\|p\|_0^2$ it follows that $\sup_{\ell:\|\ell\|_{V}=1}(\ell,Vp)_{H_V}\ge \lambda_{min} (B_T)\sup_{\ell:\|\ell\|_{V}=1}\|p\|_0^2$. On the other hand, $(\ell,Vp)_{H_V}\le \|\ell\|_{H_V} \sup_{q:\|q\|_0=1}\|Vq\|_{H_V}\|p\|_0$. Hence, we get: \begin{equation*}
\sup_{\ell:\|\ell\|_{H_V}=1}\|p\|_0^2\le \lambda^{-1}_{min} (B_T)\sup_{q:\|q\|_0=1}\|Vq\|_{H_V} \sup_{\ell:\|\ell\|_{H_V}=1}\|p\|_0\,
\end{equation*} As a result \begin{equation*}
\min_U\sigma^2(U,0)=\gamma^2_T\sup_{\ell:\|\ell\|_{V}=1}(\ell,Vp)_{H_V} \le \sup_{q:\|q\|_0=1}\|Vq\|^2_{H_V} \lambda^{-1}_{min} (B_T) \gamma^2_T\to 0\,, \quad T\to+\infty
\end{equation*}
\end{proof}
\begin{remark}
  \cref{c:unboundedG_asymptotic_convergence} implies that  $E\widehat{V\varphi}=V\varphi$, i.e. the minimax estimate is non-biased. In addition, if the operator $B_T$ has a discrete spectrum then $\lambda_{min}(B_T)$ equals to the minimal eigen value of $B_T$.
\end{remark}

\subsubsection{Bounded disturbances}
\label{sec:bounded-disturbances}

Assume that $G_0$ is an ellipsoid, i.e.
\begin{equation}
  \label{eq:ellipsoidG0}
G_0=\{[f_0,f_1]:(Q_0f_0,f_0)_{F_0}+(Q_1 f_1, f_1)_{F_1} \le 1\}\,,\quad Q_i\in \mathscr L(F_i)\,, Q_i\ge \beta_i I\,, \beta_i>0\,, i=1,2\,.
\end{equation}
\begin{theorem}\label{t:vector_mest}
Let $G_1$ be defined by~\cref{eq:G1} and assume that $R_\eta\in G_1$. Then
\begin{equation}
  \label{eq:Lpinf}
\inf_{U,0}\sup_{G_0,G_1} E\|V\varphi-\widehat{V\varphi}\|_{H_V}^2\ge \sup_{\ell:\|\ell\|_{H_V}=1}(\ell,Vp)_{H_V}
\end{equation}
provided $p$ solves
\begin{equation}
  \label{eq:zp}
  \begin{split}
    N^+z &= V^\star\Lambda\ell - \gamma_T^{-2} B_Tp\,,\quad \delta^+z=0\,,\\
    Np &= B_1 Q_1^{-1}\Lambda_1 B_1^\star z\,, \quad \delta p = B_0 Q_0^{-1} \Lambda_0 B_0^\star\gamma z
  \end{split}
\end{equation}
where $\Lambda_i$ is the canonical isomorphism of $F_i$ onto it's adjoint.
\end{theorem}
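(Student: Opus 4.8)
The plan is to derive \cref{eq:Lpinf} exactly as the lower bound in \cref{t:unbG} was obtained: I reduce the minimax problem to a one-parameter family of convex minimisations indexed by $\ell$, solve each one through the two Green formulas, and read off the value $(\ell,Vp)_{H_V}$. By \cref{c:hatc} and \cref{rem:barf0} I may take $\bar f_0=\bar f_1=0$ and $c=0$, so that $\sigma^2(U,0)=\Phi(U)$. Writing the noise term as in \cref{eq:supG1} with $\tilde u:=\Lambda_H^{-1}U^\star\Lambda\ell$, and noting that each of the two summands of $\Phi(U)$ dominates its value at any single admissible $\ell$, I obtain for every fixed $\ell$ with $\|\ell\|_{H_V}=1$
\begin{equation*}
\sigma^2(U,0)\ge \sup_{[f_0,f_1]\in G_0}\alpha^2(U,f_0,f_1,\ell) + \gamma_T^2\int_0^T (Q^{-1}\tilde u,\tilde u)_H\,dt\,.
\end{equation*}
Applying $\inf_U$ followed by $\sup_{\|\ell\|_{H_V}=1}$ and using $\inf\sup\ge\sup\inf$ reduces the claim to showing that, for each fixed $\ell$, the infimum of the right-hand side over $\tilde u\in L^2(0,T,H)$ equals $(\ell,Vp)_{H_V}$, with $p$ as in \cref{eq:zp}.

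Next I would evaluate $\sup_{G_0}\alpha^2$ for the ellipsoid \cref{eq:ellipsoidG0}. Since $\alpha$ is the linear functional $[f_0,f_1]\mapsto(B_1f_1,z)_0+\langle B_0f_0,\gamma z\rangle$, the Cauchy--Schwarz/Lagrange-duality computation for a single quadratic constraint gives $\sup_{G_0}\alpha^2$ as a nonnegative quadratic form in $z$. The second line of \cref{eq:zp} is precisely the definition of an auxiliary state $p\in H_+(N)$ for which, by the Green formula \cref{eq:green1}, this quadratic form equals $a(p,z)=(Np,z)_0+\langle\delta p,\gamma z\rangle$. Here $z$ depends affinely on $\tilde u$ through the adjoint equation \cref{eq:adjoint_op}, i.e. $N^+z=V^\star\Lambda\ell-\int_0^T C^\star\Lambda_H\tilde u\,dt$ with $\delta^+z=0$. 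The functional of $\tilde u$ to be minimised is therefore the sum of this convex quadratic term and the strictly convex, coercive, weakly lower semicontinuous term $\gamma_T^2\int_0^T(Q^{-1}\tilde u,\tilde u)_H\,dt$; as in the argument following \cref{eq:minsigma_v} it has a unique minimiser $\hat{\tilde u}$, and this time no constraint is present because $G_0$ is bounded.

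I would then impose the first-order optimality condition. Differentiating $a(p,z)$ in $\tilde u$ along a direction $u_1$ produces $2a(p,z_1)$, where $z_1$ is the corresponding variation of $z$ solving $N^+z_1=-\int_0^T C^\star\Lambda_H u_1\,dt$, $\delta^+z_1=0$; applying the adjoint Green identity $a(p,z_1)=(N^+z_1,p)_0+\langle\delta^+z_1,\gamma p\rangle$ and using $\delta^+z_1=0$ turns this into $-\int_0^T(u_1,C p)_H\,dt$, exactly mirroring the transformation \cref{eq:Gateaux_trasformation}. Setting the total Gateaux derivative to zero for all $u_1$ yields $\gamma_T^2Q^{-1}\hat{\tilde u}=Cp$, i.e. $\hat{\tilde u}=\gamma_T^{-2}Q(t)C(t)p$; substituting into the adjoint equation recovers the first line of \cref{eq:zp}, $N^+z=V^\star\Lambda\ell-\gamma_T^{-2}B_Tp$ with $\delta^+z=0$. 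Thus \cref{eq:zp} is the optimality system of the inner problem.

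Finally I would compute the optimal value. The noise term becomes $\gamma_T^2\int_0^T(Q^{-1}\hat{\tilde u},\hat{\tilde u})_H\,dt=\gamma_T^{-2}(B_Tp,p)_0$, while the ellipsoid term equals $a(p,z)$, and the adjoint Green identity together with $\delta^+z=0$ gives $a(p,z)=(N^+z,p)_0=(\ell,Vp)_{H_V}-\gamma_T^{-2}(B_Tp,p)_0$; the two contributions cancel the $B_T$-terms and leave $(\ell,Vp)_{H_V}$, precisely as in \cref{eq:minsigma_v}. Taking $\sup_{\|\ell\|_{H_V}=1}$ then yields \cref{eq:Lpinf}. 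The hard part will be the careful bookkeeping of the canonical isomorphisms $\Lambda,\Lambda_H,\Lambda_0,\Lambda_1$ and of the various duality pairings when maximising $\alpha$ over the ellipsoid and when invoking \cref{eq:green1} and the adjoint identity, together with verifying that the coupled forward--backward Neumann system \cref{eq:zp} is well posed so that $p$ is uniquely determined; by contrast, existence of $\hat{\tilde u}$ and the minimax interchange rest only on the strict convexity and coercivity already used in \cref{t:unbG}.
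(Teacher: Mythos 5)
Your proposal follows the paper's proof essentially step for step: the same reduction to $\bar f_{0,1}=0$, $c=0$, the same evaluation of $\sup_{G_0}\alpha^2$ by Schwarz/duality as the quadratic form $a(p,z)$ with $p$ defined by the second line of \cref{eq:zp}, the same $\inf\sup\ge\sup\inf$ interchange, and the same Gateaux-derivative/Green-identity computation yielding $\hat u=\gamma_T^{-2}Q(t)C(t)p$ and the value $(\ell,Vp)_{H_V}$. It is correct, and in places (the explicit first-order condition and the bookkeeping of the isomorphisms) more carefully spelled out than the paper's own argument.
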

\begin{proof}
Let us recall Schwartz inequality: $(B_1x,y)_0\le (B_1 Q^{-1}\Lambda_1 B_1^\star x,x)_0^\frac12(Qy,y)_0^\frac12$. We also recall~\cref{rem:barf0}, namely that $\bar f_{0,1}=0$. Let us set $u(\ell):=\Lambda^{-1} U^\star\Lambda \ell$ for some $U\in L^2(0,T,\mathscr{L}(H,H_V))$ and $\ell_2\in H_V$. Now, by~\cref{eq:sigmaUc,eq:G1G2} we get that:
\begin{equation}\label{eq:sigma_derivation}
  \begin{split}
    \sigma^2(U,0) &= \sup_{\|\ell\|_{H_V}=1}\sup_{G_0} \left( (z,B_1f_1)_0+\langle \gamma z, B_0 f_0 \rangle \right)^2 + \sup_{\|\ell_2\|_{H_V} =1}\sup_{G_2} \left(\int_0^T \langle U^\star\Lambda\ell_2,g\rangle dt\right)^2\\
&\stackrel{\text{by Schwartz ineq.}}{=}\sup_{\|\ell\|_{H_V}=1}\left(( B_1 Q_1^{-1}\Lambda_1 B_1^\star z,z)_0 + \langle B_0 Q_0^{-1} \Lambda_0 B_0^\star\gamma z,\gamma z\rangle \right)^2\\
&\qquad\qquad+ \gamma^2_T\sup_{\|\ell_2\|_{H_V} =1}\left(\int_0^T (Q^{-1}u(\ell_2),u(\ell_2))_{H_V} dt\right)^2\\
&\ge\sup_{\|\ell\|_{H_V}=1} \Phi(u(\ell))\,, \Phi(u(\ell)):=( B_1 Q_1^{-1}\Lambda_1 B_1^\star z,z)_0 + \langle B_0 Q_0^{-1} \Lambda_0 B_0^\star\gamma z,\gamma z\rangle +\int_0^T (Q^{-1}u(\ell),u(\ell))_{H_V} dt\,,\\
&\ge \inf_{U\in L^2(0,T,\mathscr L(H,H_V))} \sup_{\|\ell\|_{H_V}=1}\Phi(u(\ell))\ge \sup_{\|\ell\|_{H_V}=1} \inf_{U\in L^2(0,T,\mathscr L(H,H_V))} \Phi(u(\ell))\ge \inf_{u\in L^2(0,T,H)} \Phi(u)
  \end{split}
\end{equation}
Using the same type of argument~\footnote{See the proof of~\cref{eq:minsigmaG1} for the special case: $H_V=R$ and $V\varphi = (v,\varphi)_0$, $v\in H_0$.} as in the proof of~\cref{t:unbG} it is not hard to show that there exists the unique $\hat u$ such that $\inf_{u} \Phi(u) = \Phi(\hat u)$. Then $\lim_{\tau\to0+}\frac{d\Phi}{d\tau}(\hat u+\tau u_1)=0$ for all $u_1\in L^2(0,T,H)$. By using~\cref{eq:zp,eq:Nadjoint} and an argument similar to~\cref{eq:Gateaux_trasformation} we find that $\hat u = \gamma_T^{-2} Q(t) C(t) p$ and $\Phi(\hat u) = (\ell,Vp)_0$. Thus, $\inf_{U}\sigma^2(U,0) \ge \sup_{\|\ell\|_{H_V}=1} \inf_U \Phi(u(\ell))\ge \sup_{\|\ell\|_{H_V}=1}\Phi(\hat u) = \sup_{\|\ell\|_{H_V}=1} (\ell,Vp)_{H_V}$. This completes the proof.
\end{proof}
In fact, \cref{t:vector_mest} shows that the minimax error of the minimax estimate $\widehat{V\varphi}$ of the vector $V\varphi$ is bounded from below by the induced norm of the linear operator $\ell\mapsto Vp(\ell)$. The following corollary shows that this lower bound is exact for a specific type of $V$.
\begin{corollary}\label{c:scalar_mest}
  If $H_V=R^1$ and $V\varphi = (v,\varphi)_0$ for some $v\in H_0$ then $\inf_{U}\sigma^2(U,0)=(v,p)_0$, provided $p$ solves~\cref{eq:zp} with $V^\star\Lambda\ell=v$.
\end{corollary}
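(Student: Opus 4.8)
The plan is to show that the chain of inequalities in the proof of \cref{t:vector_mest} collapses to a chain of equalities once $H_V=R^1$, so that the lower bound $(v,p)_0$ is in fact attained. First I would record the simplifications special to the scalar case: $\Lambda$ is the identity on $R^1$, the constraint set $\{\ell:\|\ell\|_{H_V}=1\}$ reduces to the two points $\{\pm1\}$, and $V^\star\Lambda\ell=\ell\,v$, so that with $\ell=1$ the adjoint system \cref{eq:zp} is exactly the one defining $p$ in the statement, while $(\ell,Vp)_{H_V}=(v,p)_0$. Replacing $\ell$ by $-\ell$ flips the sign of $z$ and $p$, since \cref{eq:zp} is linear in $\ell$, and therefore leaves invariant the quadratic quantities entering $\sigma^2$ and $\Phi$; this sign symmetry is the structural fact I will exploit.

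Next I would examine the single genuine inequality in the derivation \cref{eq:sigma_derivation}, namely the passage from $\sup_\ell(\cdots)+\gamma_T^2\sup_{\ell_2}(\cdots)$ to $\sup_\ell\Phi(u(\ell))$, which rests on the superadditivity $\sup_\ell a(\ell)+\sup_{\ell_2}b(\ell_2)\ge\sup_\ell\bigl(a(\ell)+b(\ell)\bigr)$. This step is tight exactly when the two suprema are attained at a common $\ell$; since both summands in \cref{eq:sigma_derivation} are even in $\ell$ by the sign symmetry just noted, each supremum over $\{\pm1\}$ is attained at both points, and the inequality becomes an equality. Hence $\sigma^2(U,0)=\Phi(u)$ exactly, where $u=\Lambda_H^{-1}U^\star$ is the element of $L^2(0,T,H)$ associated with $U$ at $\ell=1$. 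The second observation is that in the scalar case the correspondence $U\mapsto u$ is a bijection between $L^2(0,T,\mathscr L(H,R^1))$ and $L^2(0,T,H)$, so the restriction ``$u=u(\ell)$ for some operator $U$''—the very source of the minimax gap in the vector case—is absent here.

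Combining these two facts, I would conclude $\inf_U\sigma^2(U,0)=\inf_{u\in L^2(0,T,H)}\Phi(u)=\Phi(\hat u)$, existence and uniqueness of the minimizer $\hat u$ being inherited from the strict convexity, coercivity and weak lower semicontinuity of $\Phi$ already used in \cref{t:vector_mest}. Invoking the computation carried out there, $\hat u=\gamma_T^{-2}Q(t)C(t)p$ and $\Phi(\hat u)=(\ell,Vp)_0=(v,p)_0$ at $\ell=1$, this yields the upper bound $\inf_U\sigma^2(U,0)\le(v,p)_0$, which together with the lower bound of \cref{t:vector_mest} gives the claimed equality. The point to get right—and the only place where scalarity is essential—is the tightness of the supremum-splitting step; the whole content of the corollary is that the minimax inequality $\inf\sup\ge\sup\inf$ carries no slack once $\ell$ effectively ranges over a single point up to sign.
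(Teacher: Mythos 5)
Your proposal is correct and takes essentially the same route as the paper's proof, which simply observes that in the scalar case all suprema over $\{\ell:\|\ell\|_{H_V}=1\}$ in \cref{eq:sigma_derivation} can be dropped and that $u(\ell)=u:=U^\star$ ranges over all of $L^2(0,T,H)$, so that $\sigma^2(U,0)=\Phi(u)$ and $\inf_U\sigma^2(U,0)=\Phi(\hat u)=(v,p)_0$. You merely make explicit the two facts the paper leaves implicit --- the evenness in $\ell\in\{\pm1\}$ that makes the supremum-splitting step tight, and the bijectivity of $U\mapsto u$ that closes the $\inf\sup\ge\sup\inf$ gap.
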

\begin{proof}
  Indeed, $U\in L^2(0,T,H^\star)$ and $\Lambda=1$ so that all $\sup_{\|\ell\|_{H_V}=1}\{\cdot\}$ in~\cref{eq:sigma_derivation} can be dropped, and we get that $u(\ell)=u:=U^\star\in L^2(0,T,H)$. Hence $\sigma^2(U,0) = \Phi(u)$, and $\inf_{U}\sigma^2(U,0)=(\ell,Vp)_{H_V}=(v,p)_0$.
\end{proof}
\begin{proposition}\label{p:scalar_mest_hatphi}
Assume that $H_V=R^1$ and $V\varphi = (v,\varphi)_0$ for some $v\in H_0$, and let $[\hat p,\hat\varphi]$ solve the following system:
\begin{equation}
  \label{eq:scalar_mest_hatphatphi}
  \begin{split}
    N^+ \hat p &= \gamma_T^{-2}\int_0^T C^\star \Lambda_H Q(t) y(t) dt - \gamma_T^{-2} B_T \hat\varphi\,, \quad \delta^+\hat p = 0\,,\\
    N\hat\varphi &= B_1 Q_1^{-1}\Lambda_1 B_1^\star \hat p\,, \quad \delta\hat\varphi = B_0 Q_0^{-1} \Lambda_0 B_0^\star\gamma\hat p\,.
  \end{split}
\end{equation}
Take a total orthonormal system $\{\psi_n\}_{n\in\mathbb N}$ in $H_0$. Then $\widehat{V\varphi}=\int_0^T(\hat u,y)_Hdt=(v,\hat\varphi)_0$ and $\hat\varphi = \int_0^T\hat U(t) y(t) dt$, $\hat U:=\gamma_T^{-2} \sum_{i=1}^\infty \psi_i\otimes Q(t)C(t)p_i(t)$ provided $p_i$ solves~\cref{eq:zp} with $V^\star\Lambda\ell=\psi_i$.
\end{proposition}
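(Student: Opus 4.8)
The plan is to mirror the structure of the proof of~\cref{t:unbG}: first establish the scalar identity $\widehat{V\varphi}=(v,\hat\varphi)_0$ by a double application of the Green formula~\cref{eq:Nadjoint}, and then lift it to the full vector $\hat\varphi$ by expanding in the total orthonormal system $\{\psi_n\}$. Before anything, I would record that the coupled system~\cref{eq:scalar_mest_hatphatphi} is of exactly the same type as~\cref{eq:hatp}, so by the argument already used at the end of the proof of~\cref{t:unbG} it is uniquely solvable; the same applies to~\cref{eq:zp}, which furnishes $p=p(\ell)$. By~\cref{c:scalar_mest} and the computation of the optimal kernel in the proof of~\cref{t:vector_mest}, the minimizing $\hat u$ in the scalar case is $\hat u=\gamma_T^{-2}Q(t)C(t)p$, and by~\cref{c:hatc} together with~\cref{rem:barf0} the minimax estimate (with $\hat c=0$) is $\widehat{V\varphi}=\int_0^T(\hat u,y)_H\,dt=\gamma_T^{-2}\bigl(p,\int_0^T C^\star\Lambda_H Q(t)y(t)\,dt\bigr)_0$.

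The key step is then to substitute the defining equation for $\hat p$ from~\cref{eq:scalar_mest_hatphatphi}, namely $\gamma_T^{-2}\int_0^T C^\star\Lambda_H Q(t)y(t)\,dt=N^+\hat p+\gamma_T^{-2}B_T\hat\varphi$, which gives $\widehat{V\varphi}=(p,N^+\hat p)_0+\gamma_T^{-2}(p,B_T\hat\varphi)_0$. I would then transform each term with the Green formula~\cref{eq:Nadjoint}. For the first term, applying~\cref{eq:Nadjoint} with $\psi=\hat p,\phi=p$ and using $\delta^+\hat p=0$ yields $(p,N^+\hat p)_0=(\hat p,Np)_0+\langle\gamma\hat p,\delta p\rangle$; for the second, using $\gamma_T^{-2}B_Tp=v-N^+z$ from~\cref{eq:zp} and then~\cref{eq:Nadjoint} with $\psi=z,\phi=\hat\varphi$ together with $\delta^+z=0$ yields $\gamma_T^{-2}(p,B_T\hat\varphi)_0=(v,\hat\varphi)_0-(z,N\hat\varphi)_0-\langle\gamma z,\delta\hat\varphi\rangle$. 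Substituting the coupling relations $Np=B_1Q_1^{-1}\Lambda_1B_1^\star z$, $\delta p=B_0Q_0^{-1}\Lambda_0B_0^\star\gamma z$ from~\cref{eq:zp} and $N\hat\varphi=B_1Q_1^{-1}\Lambda_1B_1^\star\hat p$, $\delta\hat\varphi=B_0Q_0^{-1}\Lambda_0B_0^\star\gamma\hat p$ from~\cref{eq:scalar_mest_hatphatphi}, the interior cross terms become $(\hat p,B_1Q_1^{-1}\Lambda_1B_1^\star z)_0$ and $(z,B_1Q_1^{-1}\Lambda_1B_1^\star\hat p)_0$, and likewise for the boundary pairings. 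The crucial observation is that the operators $B_1Q_1^{-1}\Lambda_1B_1^\star$ and $B_0Q_0^{-1}\Lambda_0B_0^\star$ are self-adjoint (these are exactly the symmetric forms appearing in the Schwartz inequality step of the proof of~\cref{t:vector_mest}, since $Q_i=Q_i^\star$), so the cross terms cancel in pairs, leaving precisely $\widehat{V\varphi}=(v,\hat\varphi)_0$. This plays the same role as the vanishing of $(N^+\hat p,p)_0$ and $(N^+z,\hat\varphi)_0$ in~\cref{t:unbG}.

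To obtain the representation of the full vector $\hat\varphi$, I would apply the scalar identity just proved with $v=\psi_i$ for each $i$. Since $\hat\varphi$ in~\cref{eq:scalar_mest_hatphatphi} does not depend on $v$, the estimate of $(\psi_i,\varphi)_0$ equals the $i$-th Fourier coefficient of $\hat\varphi$, i.e. $(\psi_i,\hat\varphi)_0=\gamma_T^{-2}\int_0^T(Q(t)C(t)p_i(t),y(t))_H\,dt$, with $p_i$ solving~\cref{eq:zp} for $V^\star\Lambda\ell=\psi_i$. Expanding $\hat\varphi=\sum_i(\psi_i,\hat\varphi)_0\psi_i$ by totality of $\{\psi_n\}$, recognizing $\psi_i(Q(t)C(t)p_i(t),y(t))_H=(\psi_i\otimes Q(t)C(t)p_i(t))y(t)$, and interchanging summation with the time integral gives $\hat\varphi=\int_0^T\hat U(t)y(t)\,dt$ with $\hat U=\gamma_T^{-2}\sum_i\psi_i\otimes Q(t)C(t)p_i(t)$, as claimed.

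The main obstacle I anticipate is not the algebra but making this last paragraph rigorous: justifying the convergence of the operator series for $\hat U$ and the interchange of the infinite sum with $\int_0^T$. The cleanest route is to note that the partial sums of the integral representation coincide with the partial Fourier sums $\sum_{i\le n}(\psi_i,\hat\varphi)_0\psi_i$, which converge to $\hat\varphi$ in $H_0$ because $\sum_i|(\psi_i,\hat\varphi)_0|^2=\|\hat\varphi\|_0^2<\infty$ by Parseval; thus the representation is to be read as this $H_0$-convergent limit, and the sum--integral interchange is legitimate term by term since $\int_0^T(\psi_i\otimes Q(t)C(t)p_i(t))y(t)\,dt$ is exactly the $i$-th Fourier coefficient times $\psi_i$. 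The uniform solvability of~\cref{eq:zp}, i.e. boundedness of the map $\psi_i\mapsto p_i$ that underpins these estimates, in turn follows from the coercivity~\cref{eq:coercive_form}.
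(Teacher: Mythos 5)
Your proposal is correct and follows essentially the same route as the paper's own proof: identify $\hat u=\gamma_T^{-2}Q(t)C(t)p$ from \cref{c:scalar_mest} and the proof of \cref{t:vector_mest}, substitute the first equation of \cref{eq:scalar_mest_hatphatphi}, apply the Green formula \cref{eq:Nadjoint} twice so that the cross terms cancel by the symmetry of $B_1Q_1^{-1}\Lambda_1B_1^\star$ and $B_0Q_0^{-1}\Lambda_0B_0^\star$, and then expand in the basis $\{\psi_i\}$ using linearity of $\ell\mapsto p(\ell)$. Your Fourier-coefficient phrasing of the last step and the added remark on convergence of the series for $\hat U$ are harmless refinements of the same argument.
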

\begin{proof}
\cref{c:scalar_mest} shows that in the assumptions of~\cref{p:scalar_mest_hatphi} the minimax estimate takes the following form: $\widehat{V\varphi}=\int_0^T(\hat u,y)_Hdt$. Recall from the proof of~\cref{t:vector_mest} that $\hat u = \gamma_T^{-2} Q(t) C(t) p$ provided $p$ solves~\cref{eq:zp}. Hence
\begin{equation*}
  \begin{split}
    \int_0^T(\hat u,y)_Hdt &= \gamma_T^{-2} (p, \int_0^T C^\star \Lambda_H Q(t) y(t) dt)_0 = (p,  N^+ \hat p ) + \gamma_T^{-2} (p, B_T \hat\varphi)_0\\
& = (p,  N^+ \hat p ) + \gamma_T^{-2} (B_T p, \hat\varphi)_0 = (p,  N^+ \hat p )_0 + (v, \hat\varphi)_0 - ( N^+z, \hat\varphi)_0
  \end{split}
\end{equation*}
By~\eqref{eq:Nadjoint} we get that
\begin{equation*}
  \begin{split}
(p,  N^+ \hat p )_0&-( N^+z, \hat\varphi)_0 = (\hat p, Np)_0 -(z,N\hat\varphi)_0 + \langle \gamma\hat p, \delta p\rangle-  \langle\gamma z, \delta \hat\varphi\rangle\\
& = (\hat p, B_1 Q_1^{-1}\Lambda_1 B_1^\star z)_0 - (z, B_1 Q_1^{-1}\Lambda_1 B_1^\star\hat p)_0 + \langle \gamma\hat p, B_0 Q_0^{-1} \Lambda_0 B_0^\star\gamma z\rangle - \langle\gamma z, B_0 Q_0^{-1} \Lambda_0 B_0^\star, \gamma\hat p\rangle =0\,.
\end{split}
\end{equation*}
Hence $\int_0^T(\hat u,y)_Hdt =(v, \hat\varphi)_0$. Now, we note that $v=\sum_{i=1}^\infty(v,\psi_i)_0\psi_i$ and so $p=\sum_{i=1}^\infty(v,\psi_i)_0 p_i $. But then \begin{equation*}
\int_0^T(\hat u,y)_Hdt = \gamma_T^{-2} (p, \int_0^T C^\star \Lambda_H Q(t) y(t) dt)_0 =
\gamma_T^{-2} \sum_{i=1}^\infty(v,\psi_i)_0  \int_0^T (Q(t) C p_i, y(t))_H dt =
( v, \int_0^T U(t) y(t) dt)_0
\end{equation*} hence\footnote{Recall that if $q=\sum_{j=1}^\infty \psi_i\otimes\phi_i$ provided $\psi_i$ is a total orthonormal system in $H_0$, $\phi_i$ is a total orthonormal system in $H_1$, then $\langle x, q y\rangle = \sum_{j=1}^\infty (\psi_i, x)_0 (\phi_i,y)_1$.} $\hat\varphi = \int_0^T\hat U(t) y(t) dt$.
\end{proof}
\begin{proposition}\label{p:convergence_boundedG}
Assume that $G\subset F_0\times F_1$ is bounded. If $\lim_{T\to\infty} \gamma_T^{-2}\inf_{\psi:\|\psi\|_H=1} (B_T\psi,\psi)_0=+\infty$ then $\lim_{T\to\infty} \inf_{U,c}\sup_{G,G_1} E((v,\varphi)_0-\widehat{V \varphi})^2=0$.
\end{proposition}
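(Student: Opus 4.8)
\emph{Proof sketch.} The plan is to reduce the minimax value to the closed‑form scalar expression already available and then to run an energy estimate on the associated adjoint system, in complete analogy with the proof of \cref{c:unboundedG_asymptotic_convergence}, but now with the ellipsoidal scaling $\gamma_T^{-2}$.

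First I would use \cref{c:hatc} and \cref{rem:barf0} to center the bounding set, so that without loss of generality $\bar f_0=\bar f_1=0$, $\hat c=0$, and $\inf_{U,c}\sup_{G,G_1}E((v,\varphi)_0-\widehat{V\varphi})^2=\inf_U\sigma^2(U,0)$ for the symmetric convex set $G_0$ with $G_0=(-1)G_0$. Since $G$ is bounded, so is $G_0$, and hence $G_0$ is contained in a centered ellipsoid $G_0^{\mathrm{ell}}$ of the form \cref{eq:ellipsoidG0} (for instance a ball $\{[f_0,f_1]:\|f_0\|^2_{F_0}+\|f_1\|^2_{F_1}\le r^2\}$ for $r$ large enough, i.e.\ $Q_i=r^{-2}\Lambda_i$). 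By \cref{eq:sigmaUc} with $c=0$ the worst‑case error is monotone under enlargement of the disturbance set, so $\inf_U\sigma^2(U,0;G_0)\le\inf_U\sigma^2(U,0;G_0^{\mathrm{ell}})$; as the left‑hand side is nonnegative, it suffices to prove that the right‑hand side tends to $0$.

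For the ellipsoidal set $G_0^{\mathrm{ell}}$, \cref{c:scalar_mest} gives the exact value $\inf_U\sigma^2(U,0)=(v,p)_0$, where $p$ (together with $z$) solves \cref{eq:zp} with $V^\star\Lambda\ell=v$. The key step is the energy identity obtained by testing the first equation of \cref{eq:zp} against $p$ in $(\cdot,\cdot)_0$, applying Green's formula \cref{eq:Nadjoint} with $\psi=z$, $\phi=p$ and $\delta^+z=0$, and substituting $Np$ and $\delta p$ from the second line of \cref{eq:zp}:
\[
(v,p)_0 = (B_1 Q_1^{-1}\Lambda_1 B_1^\star z,z)_0 + \langle B_0 Q_0^{-1}\Lambda_0 B_0^\star\gamma z,\gamma z\rangle + \gamma_T^{-2}(B_T p,p)_0\,.
\]
By the Schwartz inequality used in the proof of \cref{t:vector_mest}, the operators $B_1 Q_1^{-1}\Lambda_1 B_1^\star$ and $B_0 Q_0^{-1}\Lambda_0 B_0^\star$ induce nonnegative quadratic forms, so the first two summands may be dropped to yield the lower bound $(v,p)_0\ge\gamma_T^{-2}(B_T p,p)_0$. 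I would then close the estimate exactly as in \cref{c:unboundedG_asymptotic_convergence}: combining $(B_T p,p)_0\ge\lambda_{min}(B_T)\|p\|_0^2$ with the Cauchy--Schwarz bound $(v,p)_0\le\|v\|_0\|p\|_0$ gives $\|p\|_0\le\|v\|_0\big(\gamma_T^{-2}\lambda_{min}(B_T)\big)^{-1}$, whence $\inf_U\sigma^2(U,0)=(v,p)_0\le\|v\|_0^2\big(\gamma_T^{-2}\lambda_{min}(B_T)\big)^{-1}\to0$ as $T\to\infty$ by the hypothesis $\gamma_T^{-2}\lambda_{min}(B_T)\to+\infty$.

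The main obstacle I anticipate is the reduction in the first step: one must verify that the minimax value is genuinely monotone under enlargement of the disturbance set and that a general bounded (convex, symmetric) $G_0$ can be replaced by an ellipsoid without affecting the $T\to\infty$ limit, since the closed‑form formula of \cref{c:scalar_mest} is available only for ellipsoidal bounding sets. A secondary, bookkeeping‑level subtlety is the norm in which $\lambda_{min}(B_T)$ is defined and the consistency of the coercivity bound $(B_T p,p)_0\ge\lambda_{min}(B_T)\|p\|_0^2$ with the Cauchy--Schwarz step, exactly as handled in \cref{c:unboundedG_asymptotic_convergence}.
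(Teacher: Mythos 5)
Your proposal is correct and follows essentially the same route as the paper's own proof: enclose the bounded set $G$ in an ellipsoid of the form \cref{eq:ellipsoidG0}, invoke \cref{c:scalar_mest} to get $\inf_U\sigma^2(U,0)=(v,p)_0$, derive the same energy identity from \cref{eq:zp} and \cref{eq:Nadjoint}, drop the two nonnegative terms, and close with the coercivity of $B_T$ and Cauchy--Schwarz. The monotonicity/enclosure step you flag as a potential obstacle is handled in the paper exactly as you suggest (one simply chooses $Q_{0,1}$ so that $G\subset G_0$), so there is no gap.
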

\begin{proof}
  Indeed, given any bounded $G\subset F_0\times F_1$ we can find $Q_{0,1}$ such that $G\subset G_0$ where $G_0$ is defined by~\cref{eq:ellipsoidG0}. Thus \(
\sup_{G,G_1} E((v,\varphi)_0-(v,\hat\varphi)_0)^2\le \sup_{G_0,G_1} E((v,\varphi)_0-(v,\hat\varphi)_0)^2
\) and so, by~\cref{c:scalar_mest}: \begin{equation*}
\inf_{U,c}\sup_{G,G_1} E((v,\varphi)_0-\widehat{V\varphi})^2\le \inf_{U}\sup_{G_0,G_1} E((v,\varphi)_0-\widehat{V\varphi} )^2 = (v,p)_0\,.
\end{equation*}
We have by~\eqref{eq:zp} that
\begin{equation*}
  \begin{split}
(v,p)_0 &= (N^+z,p)_0+\gamma_T^{-2} (B_Tp,p)_0 = (z,Np)_0 + \langle \gamma z, B_0 Q_0^{-1}\Lambda_0 B_0^\star \gamma z\rangle + \gamma_T^{-2} (B_Tp,p)_0\\
& = (z, B_1 Q_1^{-1}\Lambda_1 B_1^\star z)_0 + \langle \gamma z, B_0 Q_0^{-1}\Lambda_0 B_0^\star \gamma z\rangle + \gamma_T^{-2} (B_Tp,p)_0\ge \gamma_T^{-2} (B_Tp,p)_0\\
&\ge  \gamma_T^{-2}\inf_{\psi:\|\psi\|_H=1} (B_T\psi,\psi)_0\|p\|_0^2\,.
\end{split}
\end{equation*}
Hence $\|p\|_0\le \frac{\gamma_T^2}{\inf_{\psi:\|\psi\|_H=1} (B_T\psi,\psi)_0} \|v\|_0 $. Finally, we get \begin{equation*}
(v,p)_0\le \|v\|_0\|p\|_0\le \frac{\gamma_T^2}{\inf_{\psi:\|\psi\|_H=1} (B_T\psi,\psi)_0} \|v\|_0^2 \to0\,,\quad T\to\infty\,.
\end{equation*}
\end{proof}
\begin{proposition}\label{p:error_upper_bound_unboundedG}
Assume that $G\in F_0\times F_1$ and $\mathscr D_1:=\{U:(z,B_1f_1)_0=0\,,\quad \langle B_0f_0,\gamma z\rangle=0\}\ne\varnothing\}$. Then \begin{equation*}
\inf_{U,c}\sup_{G,G_1} E\| V\varphi - \widehat{V\varphi}\|^2_{H_V} \le \gamma_T^2\sup_{\|\ell\|_{H_V}}(\ell,Vp)_{H_V}
\end{equation*} where $p$ solves~\eqref{eq:hatp}. If, in addition, $$
\lim_{T\to\infty} \gamma_T^{-2}\inf_{\psi:\|\psi\|_H=1} (B_T\psi,\psi)_0=+\infty
$$ then $\lim_{T\to\infty} \inf_{U,c}\sup_{G,G_1} E\| V\varphi - \widehat{V\varphi}\|^2_{H_V}=0$.
\end{proposition}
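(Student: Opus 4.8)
The plan is to reduce the statement to the exact formula already obtained for the unbounded case in \cref{t:unbG}, exploiting the elementary fact that shrinking the disturbance set can only decrease the worst-case error. Since $G\subseteq F_0\times F_1$, the minimax estimator constructed for the enlarged set $F_0\times F_1$ is a legitimate competitor in the infimum over estimators for the smaller set $G$, and its worst-case error over $G$ is no larger than its worst-case error over $F_0\times F_1$.

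Concretely, I would first invoke \cref{rem:barf0} to take $\bar f_0=\bar f_1=0$, so that the minimax estimator furnished by \cref{t:unbG} (applied with $G_0=F_0\times F_1$) has the form $\widehat{V\varphi}=V\hat\varphi$ with $\hat c=0$; this estimator is well defined and yields a finite value precisely because $\mathscr D_1\neq\varnothing$. Monotonicity of the supremum over the inclusion $G\subseteq F_0\times F_1$ then gives $\sup_{G,G_1} E\|V\varphi-V\hat\varphi\|^2_{H_V}\le\sup_{F_0\times F_1,G_1} E\|V\varphi-V\hat\varphi\|^2_{H_V}=\sigma^2(\hat U,0)$. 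Because $V\hat\varphi$ is an admissible affine estimate, it serves as a competitor in $\inf_{U,c}$, whence
\[
\inf_{U,c}\sup_{G,G_1} E\|V\varphi-\widehat{V\varphi}\|^2_{H_V}\le\sup_{G,G_1} E\|V\varphi-V\hat\varphi\|^2_{H_V}\le\sigma^2(\hat U,0)=\gamma_T^2\sup_{\|\ell\|_{H_V}=1}(\ell,Vp)_{H_V},
\]
where the final equality is \cref{eq:minsigmaG1} and $p$ solves \cref{eq:hatp}. This establishes the first claim.

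For the asymptotic part I would observe that the hypothesis $\lim_{T\to\infty}\gamma_T^{-2}\inf_{\|\psi\|_H=1}(B_T\psi,\psi)_0=+\infty$ coincides exactly with the hypothesis of \cref{c:unboundedG_asymptotic_convergence} (with $\lambda_{\min}(B_T)=\inf_{\|\psi\|_H=1}(B_T\psi,\psi)_0$). Hence $\gamma_T^2\sup_{\|\ell\|_{H_V}=1}(\ell,Vp)_{H_V}=\min_U\sigma^2(U,0)\to 0$ as $T\to\infty$. Combining this with the upper bound from the first part and the nonnegativity of the mean-squared error, a squeeze argument yields $\lim_{T\to\infty}\inf_{U,c}\sup_{G,G_1} E\|V\varphi-\widehat{V\varphi}\|^2_{H_V}=0$.

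The argument involves no genuine analytic difficulty; the only point requiring care is bookkeeping, namely verifying that the estimator optimal for the enlarged set $F_0\times F_1$ remains admissible and produces a finite worst-case error for the problem over $G$. This hinges on the assumption $\mathscr D_1\neq\varnothing$, which keeps $\sigma^2$ finite and guarantees that the exact value $\gamma_T^2\sup_{\|\ell\|_{H_V}=1}(\ell,Vp)_{H_V}$ from \cref{t:unbG} is meaningful; beyond this the proof is a direct monotonicity reduction.
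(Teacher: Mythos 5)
Your proposal is correct and follows essentially the same route as the paper: bound the minimax error over $G$ by the minimax error over the larger set $F_0\times F_1$, identify the latter with $\gamma_T^2\sup_{\|\ell\|_{H_V}=1}(\ell,Vp)_{H_V}$ via \cref{eq:minsigmaG1}, and invoke \cref{c:unboundedG_asymptotic_convergence} for the limit. Your write-up merely spells out the monotonicity step (using the optimal estimator for the enlarged set as a competitor) in more detail than the paper does.
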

\begin{proof}
Since $G\subset F_0\times F_1$ it follows that \begin{equation*}
\sigma_T^2=\inf_{U,c}\sup_{G,G_1} E\| V\varphi - \widehat{V\varphi}\|^2_{H_V}\le \sigma_1^2:=\inf_{U,c}\sup_{F_0\times F_1,G_1} E\| V\varphi - \widehat{V\varphi}\|^2_{H_V}
\end{equation*} Now, according to~\cref{eq:minsigmaG1} we get that: $\sigma_1^2 = \gamma^2_T\sup_{\ell:\|\ell\|_{V}=1}(\ell,Vp)_{H_V}$. The last statement of the proof follows from~\cref{c:unboundedG_asymptotic_convergence}.
\end{proof}
\section{Applications: pseudoinverse of a compact operator}
\label{sec:numer-exper}

Assume that $\Omega=(0,1)^2$, and let $H^k(\Omega)$ denote the Sobolev space of $L^2(\Omega)$-functions $f$ such that all weak derivatives of $f$ up to order $k$ belong to $L^2(\Omega)$, i.e. $\partial_{x1}^{\alpha_1}\partial_{x_2}^{\alpha_2}f\in L^2(\Omega)$ for any natural $\alpha_{1,2}$ such that $\alpha_1+\alpha_2\le k$. Define $H_+:=H^1(\Omega)$ and $H_0:=L^2(\Omega)$ and set \begin{equation*}
a(\phi,\psi)=\beta\int_\Omega (\nabla\phi,\nabla\psi)_{R^2} dx_1dx_2 +\mu (\phi,\psi)_{L^2(\Omega)}\,,\quad \beta\ge0\,, \mu>0\,.
\end{equation*}
In this case \begin{equation*}
N\varphi = -\beta\Delta\varphi + \mu\varphi\,, \quad H_+(N)=\{\varphi\in H^1(\Omega): \Delta\varphi\in L^2(\Omega)\} = H^2(\Omega)\,,
\end{equation*}
and the Green's formula~\eqref{eq:green1} \begin{equation*}
a(\phi,\psi) = (N\phi,\psi)_0+ \langle\delta\phi,\gamma\psi\rangle
\end{equation*} takes the following familiar form: \begin{equation*}
\begin{split}
  \int_\Omega (\nabla\phi,\nabla\psi)_{R^2} dx_1dx_2 = \int_\Omega(-\Delta)\varphi\psi dx_1dx_2+ \int_{\partial\Omega} (\nabla\phi,\vec n(x_1,x_2))_{R^2}\, \psi \,d \partial\Omega\,,
\end{split}
\end{equation*}
 where $\vec n(x_1,x_2)$ is an outward pointing normal vector to $\partial\Omega$. Note that in the considered case $\gamma$ is a standard trace operator mapping $H^1(\Omega)$ into $H_\partial=H^{\frac 12}(\Omega)\subset L^2(\partial\Omega)$, the space of traces of all $H^1(\Omega)$-functions, and $\delta$ is a bounded linear operator from $H_+(N) = H^2(\Omega)$ to $H_\partial^\star= H^{-\frac 12}(\Omega)$, the space of all linear continuous functionals over $\gamma(H^1(\Omega))$ defined as follows: \begin{equation*}
\langle \delta\phi,\gamma\psi\rangle = \int_{\partial\Omega} (\nabla\phi,\vec n(x_1,x_2))_{R^2}\, \psi \,d \partial\Omega\,, \phi\in H^1(\Omega)\,, \psi\in H^2(\Omega)\,.
\end{equation*}
Since the form $a$ is symmetric, we have that $N^+=N$ and $\delta^+=\delta$. Finally, the Neumann problem~\cref{eq:state} reads as follows: find $\varphi\in H^2(\Omega)$ such that
\begin{equation}
  \label{eq:state_laplas}
   -\beta\Delta\varphi + \mu\varphi = B_1 f_1\, \quad (\nabla\varphi,\vec n(x_1,x_2))_{R^2} = B_0 f_0\text{ on }\partial\Omega\,,
\end{equation}
This problem has the unique solution $\varphi\in H^2(\Omega)$ for any $B_1f_1\in L^2(\Omega)$ and $B_0f_0\in H^{-\frac 12}(\Omega)$ which coincides with the solution of the variational equation~\eqref{eq:neumann_var} (see~\cite[p.168]{Aubin1972}). In what follows we assume that $B_0=0$ and $B_1=I$.

We further assume that $H=H_0=L^2(\Omega)$, $Q_0=I$, $Q_1=1$ so that $G_0=\{[f_0,f_1]:\|f_0\|^2_{0}+\|f_1\|^2_0 \le 1\} $, and $Q(t)=q(t)I$, $C(t)=c(t)K$, provided $q(t)>\alpha^2>0$, and $t\mapsto c(t)$ is a given continuous function, $K\in \mathscr{L}(H_0)$. Assume that $\xi\in L^2(\Omega)$ is such that $\|\xi\|_{L^2(\Omega)}=1$. As noted in the proof of~\cref{t:unbG}, the $\sup$ over $G_1$ is attained on a ``deterministic process'' so we can take $\eta(t)=f(t)\xi(x_1,x_2)$, and take any $f:R\to R$ such that $\int_0^T q(t)f^2(t)dt\le \gamma^2_T$. Then $(Q(t)\eta(t),\eta(t))_{H}=q(t) f^2(t) \|\xi\|^2_{L^2(\Omega)} \le \gamma^2_T$. In other words, we let $G_2$ be the tensor product of the unit ball of $H_0$ with the ellipsoid $\{f: \int_0^T q f^2 dt\le 1\}$. Now, let us assume that $\varphi_{true}$ solves~\cref{eq:state_laplas} for some $f_1\in G_0=\{[f_0,f_1]:\|f_0\|^2_{0}+\|f_1\|^2_0 \le 1\}$ and we observe
\begin{equation*}
y(x_1,x_2,t) = c(t)(K \varphi_{true})(x_1,x_2) + f(t)\xi(x_1,x_2)
\end{equation*}
Let $H_V=R^1$, $V\varphi = (v,\varphi)_0$. According to~\cref{t:vector_mest,c:scalar_mest,p:scalar_mest_hatphi} we have that the minimax estimate of $V\varphi = (v,\varphi)_0$ is given by $\widehat{V\varphi}=(v,\hat\varphi)_0$ provided
\begin{equation}
  \label{eq:scalar_mest_hatphatphi_laplas}
  \begin{split}
    &N \hat p + \gamma_T^{-2}  \omega(T) K^\star K\hat\varphi= \gamma_T^{-2} \omega(T) K^\star K \varphi_{true} + \gamma_T^{-2}\int_0^T c(t) q(t) f(t) dt K^\star\xi\,, \quad \delta^+\hat p = 0\,,\\
    &N\hat\varphi = \hat p\,, \quad \delta\hat\varphi = 0\,, \omega(T):=\int_0^T c^2(t) q(t) dt\,.
  \end{split}
\end{equation}
Now, substituting $\hat p = N\hat\varphi$ into the first equation of~\cref{eq:scalar_mest_hatphatphi_laplas}, and multiplying the result by $\gamma_T^{2}\omega^{-1}(T)$ we can write: \begin{equation*}
( \frac{\gamma^2_T}{\omega(T)} N^2 + K^\star K) \hat\varphi= K^\star K \varphi_{true} + \frac{\int_0^T c(t) q(t) f(t) dt}{\omega(T)} K^\star\xi
\end{equation*}
By~\cref{p:convergence_boundedG}, we have that \begin{equation*}
\lim_{T\to\infty} \sup_{G_0,G_1} E((v,\varphi)_0- (v, \hat\varphi)_0)^2=0\,, \text{ if } \inf_{\psi:\|\psi\|_0=1} (K\psi,K\psi)_0\lim_{T\to\infty} \gamma_T^{-2}\omega(T) =+\infty
\end{equation*}
Hence, if the spectrum of the self-adjoint non-negative operator $K^\star K$ does not contain $0$, then\footnote{Note that the numerical range of $K^\star K$, defined by $\{(Kx,Kx)_0\,, \|x\|_0=1\}$ contains the spectrum of $K^\star K$, and so $\inf_{\psi:\|\psi\|_0=1} (K\psi,K\psi)_0=0$ if $0$ is in the spectrum of $K^\star K$} the expectation of $\hat\varphi$ converges weakly in $H_0$ to $\varphi$.

Note that, for compact operators $K$ of infinite rank, and more specifically for positive self-adjoint Hilbert-Schmidt integral operators, $0$ is always in the spectrum of $K^*K$, hence~\cref{p:convergence_boundedG} does not apply. However, if $\lim_{T\to\infty}\frac{\int_0^T c(t) q(t) f(t) dt}{\omega(T)}=0$ it follows that \begin{equation*}
\hat\varphi(T) = \left( \frac{\gamma^2_T}{\omega(T)} N^2 + K^\star K\right)^{-1}K^\star K \varphi_{true}+\frac{\int_0^T c(t) q(t) f(t) dt}{\omega(T)} \left( \frac{\gamma^2_T}{\omega(T)} N^2 + K^\star K\right)^{-1}K^\star\xi\,.
\end{equation*} Assume that $\beta=0$ and $\mu=1$ so that $N=I$ and define
\begin{equation}
  \label{eq:K_pseudoinverse}
q_T:=\left( \frac{\gamma^2_T}{\omega(T)} I + K^\star K\right)^{-1}K^\star K \varphi_{true} = \sum_{n=1}^\infty \frac{\lambda_n}{\frac{\gamma^2_T}{\omega(T)}+\lambda_n} (\varphi_{true},\varphi_n)_0\varphi_n
\end{equation}
where $\{\varphi_n\}$ is the total orthonormal system of eigenvectors of the compact self-adjoint operator $K^\star K$, and $\lambda_n\ge0$ are the eigenvalues of $K^\star K$: $K^\star K \varphi_n=\lambda_n \varphi_n$.  Clearly, $\frac{\lambda_n}{\frac{\gamma^2_T}{\omega(T)}}\ne 0$ if $\lambda_n> 0$ and $\frac{\lambda_n}{\frac{\gamma^2_T}{\omega(T)}}= 0$ otherwise. Hence, for $T\to\infty$ we get that $q_T\to \varphi_{true}$ provided $\min_n \lambda_n >0$ and $q_T\to \varphi^\perp_{true}$ otherwise, where $\varphi^\perp_{true}$ is the orthogonal projection of $\varphi_{true}$ onto the orthogonal completion of the null-space of $K^\star K$. Now, by using singular value decomposition of $K$, i.e. $$
Kg = \sum_n \lambda_n^\frac 12 (v_n,g)_0 u_n,\qquad K^\star g = \sum_n \lambda_n^\frac 12 (u_n,g)_0 v_n,
$$ for some total orthonormal systems $\{u_n\}$ and $\{v_n\}$ we get that $$
( \frac{\gamma^2_T}{\omega(T)} I + K^\star K)q = \sum_n \left(\frac{\gamma^2_T}{\omega(T)}+\lambda_n\right)(v_n,q)_0 v_n,\qquad K^\star \xi = \sum_n \lambda_n^\frac 12 (u_n,\xi)_0 v_n
$$ hence $$
g_T = \sum_n \frac{\lambda_n^\frac 12 }{\frac{\gamma^2_T}{\omega(T)}+\lambda_n} (u_n,\xi)_0 v_n = \left( \frac{\gamma^2_T}{\omega(T)} I + K^\star K\right)^{-1}K^\star\xi
$$ and clearly $g_T$ converges to a vector in $H$ for $T\to \infty$. Thus, for the worst-case realisation of $\xi$, we have that
\begin{equation*}
\frac{\int_0^T c(t) q(t) f(t) dt}{\omega(T)} \left( \frac{\gamma^2_T}{\omega(T)} I + K^\star K\right)^{-1}K^\star\xi\to 0\,.
\end{equation*}
More specifically, if $K$ is of Volterra type, so that the null-space of $K$ is trivial, then the algorithm \begin{equation*}
T\mapsto q_T + \frac{\int_0^T c(t) q(t) f(t) dt}{\omega(T)} \left( \frac{\gamma^2_T}{\omega(T)} I + K^\star K\right)^{-1}K^\star\xi
\end{equation*} ``differentiates'' the noisy signal $y$ and converges to $\varphi_{true}$, the ``derivative'' of $K \varphi_{true}$.

\subsubsection{Numerical experiment: positive Hilbert-Schmidt operator}
\label{sec:poisson-equations}

For the numerical experiment we took $K\varphi = \int_\Omega e^{-\frac{(x-x')^2+(y-y')^2}{2*10^2}}\varphi(x',y')dx'dy'$, $q(t)=1$, $\mu:=10^{-4}$ and defined $c$ and $f$ as follows:
\begin{itemize}
\item $c(t)=1$ if $t\in [n-n^{-\frac12}, n]$ for any natural $n$ and $c(t)=0$ otherwise
\item $f(t) = n^{-2} \frac{t-n+n^{-\frac 12}}{n^{-\frac 12} - n^{-2}}$ if $t\in [n-n^{-\frac 12}, n-n^{-2}]$, and $f(t) = (1-n^{-2}) \frac{t-n+n^{-2}}{n^{-2}} + n^{-2}$ if $t\in [n-n^{-2},n]$, and $f=0$ otherwise
\end{itemize}
Clearly, $\int_0^{+\infty}c^2(t) dt = \sum_{n=1}^{+\infty}\frac1{\sqrt{n}}=+\infty$, and $f(n)=1$, and \begin{equation*}
\int_0^{+\infty} f(t) dt = \sum_{n=1}^{+\infty} \frac{1}{2 n^2}+\frac{n^{3/2}}{2 n^4} = \frac{1}{12} \left(6 \zeta \left(\frac{5}{2}\right)+\pi ^2\right)\approx 1.49\,,
\end{equation*} where $\zeta$ denotes the Riemann zeta function. Moreover, \begin{equation*}
\int_0^{+\infty} f^2(t) dt = \sum_{n=1}^{+\infty} \frac{1}{3 n^4}+\frac{1}{3 n^2}+\frac{n^{3/2}}{3 n^6} = \frac{1}{270} \left(90 \zeta \left(\frac{9}{2}\right)+\pi ^4+15 \pi ^2\right) \approx 1.26\,.
\end{equation*}
It then follows that $\int_0^T c(t) q(t) f(t) dt\le 1.49$ and we can set $\gamma_T^2=1.26$. We take \begin{equation*}
f_1:=100\cos(2\pi x)\cos(3\pi y) + 0.1 \exp\{ -\frac{(x - 0.5)^2+(y-0.5)^2}{0.05^2}\}
\end{equation*} to generate the ``true'' solution $\varphi_{true}$ of~\cref{eq:state_laplas} (see~\cref{fig:truth}). This solution has been approximated by Spectral Element method with $60\times 60$ elements and 2nd order Lagrange interpolation polynomials over Gauss-Legendre-Lobatto quadrature points~\cite{SEM_CFD}. It was then used to generate the observations $y$~\cref{fig:obs}. Finally, the minimax estimate has been computed for $T_1$ such that $\omega(T) = \int_0^{T}c^2(t) dt = 23\times 10^6$ and so
\begin{equation*}
\hat\varphi(T_1) = \left( \frac{1.26}{23}\times 10^{-6} N^2 + K^\star K\right)^{-1}K^\star K \varphi_{true}+\frac{1.49}{23}\times 10^{-6} \left( \frac{1.26}{23}\times 10^{-6} N^2 + K^\star K\right)^{-1}K^\star\xi\,.
\end{equation*}
The estimate is presented in~\cref{fig:mest}.
\begin{figure}
        \centering
        \begin{subfigure}[b]{0.5\textwidth}
                \includegraphics[width=\textwidth]{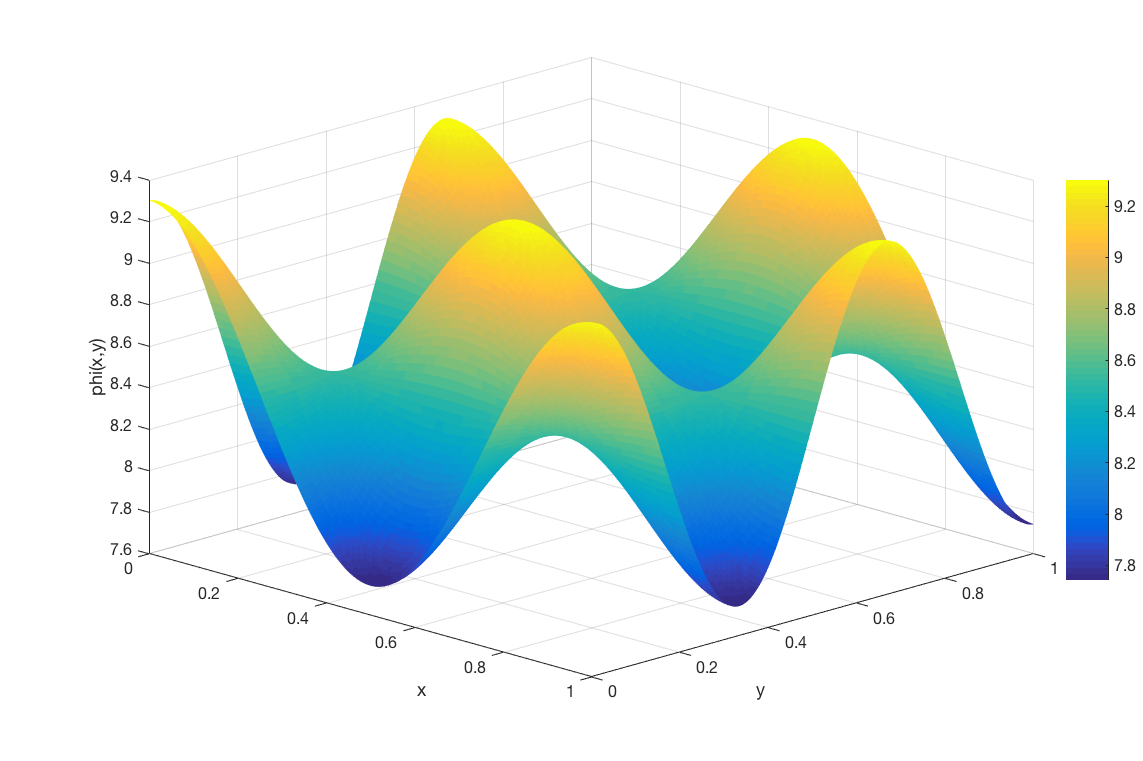}
                \caption{Truth}
                \label{fig:truth}
        \end{subfigure}%
        ~ 
\begin{subfigure}[b]{0.5\textwidth}
                \includegraphics[width=\textwidth]{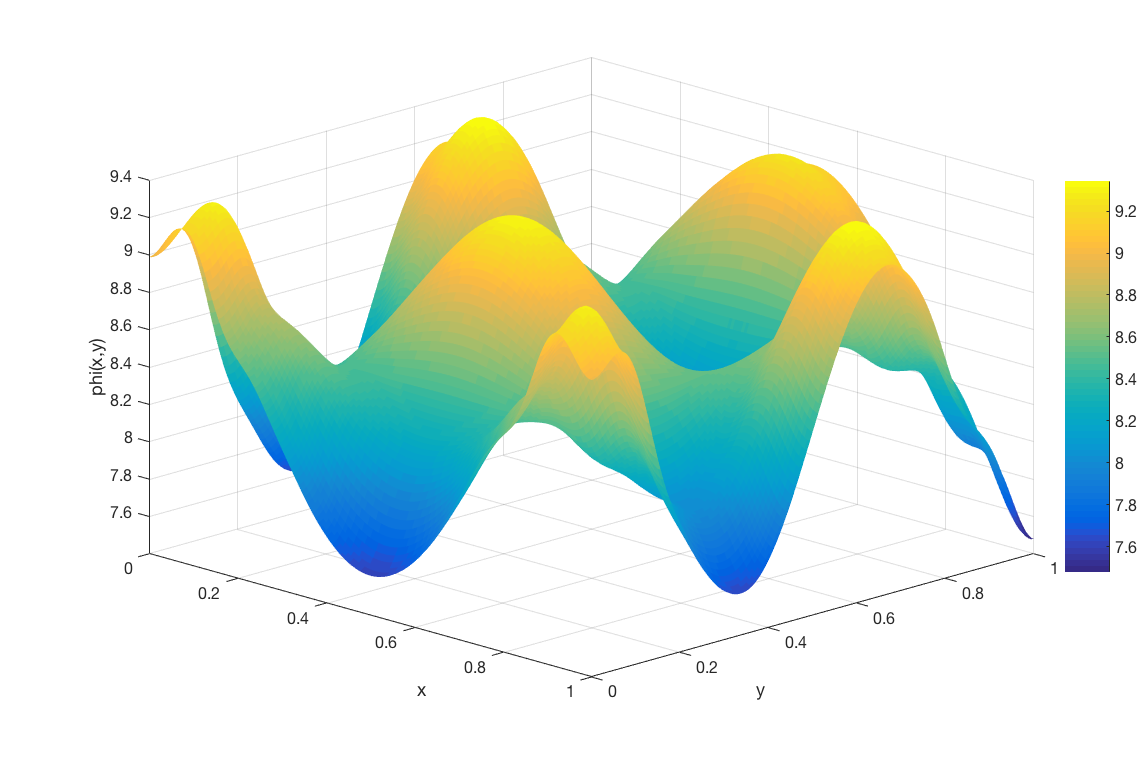}
                \caption{Minimax estimate (relative $L^2$-error $\approx$ 1\%) }
                \label{fig:mest}
        \end{subfigure}%

        \begin{subfigure}[b]{0.5\textwidth}
                \includegraphics[width=\textwidth]{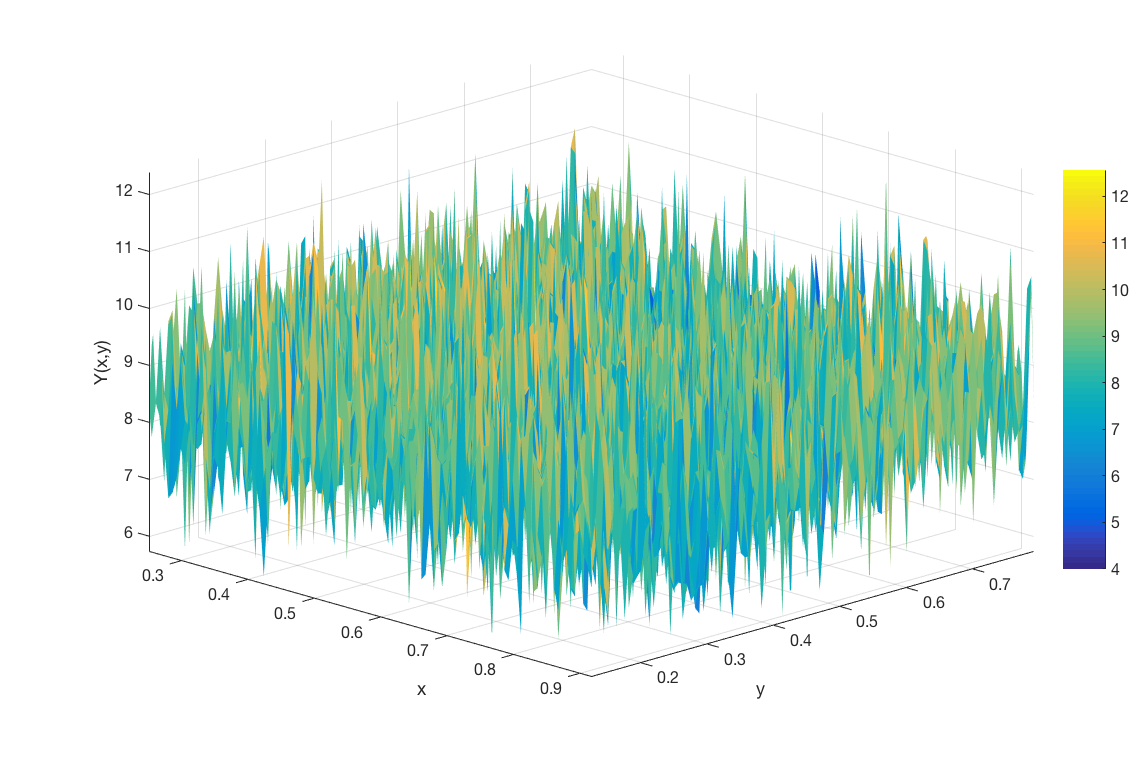}
                \caption{Observations}
                \label{fig:obs}
        \end{subfigure}%
        ~ 
        \begin{subfigure}[b]{0.45\textwidth}
                \includegraphics[width=\textwidth]{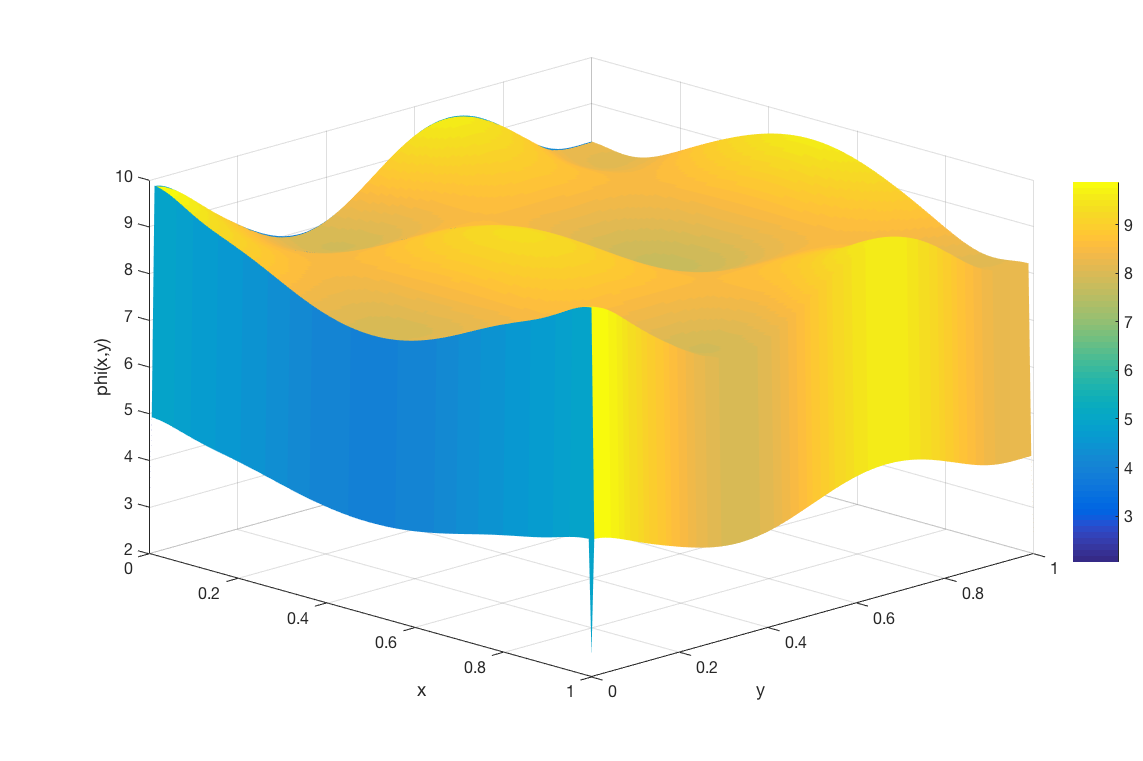}
                \caption{Pseudo-inverse: $N=I$, no noise in observations, (relative $L^2$-error circa 8\%).}
                \label{fig:pinv}
        \end{subfigure}%
        ~ 
	\caption{Truth, observations, estimate and pseudoinverse}
\end{figure}

\section{Conclusion}
\label{sec:conclusion}

The paper provides representations for the minimax estimates of solutions of abstract Neumann problems with uncertain inputs/boundary conditions given incomplete and noisy measurements of the solution. The exact expressions for the minimax estimate are derived for the case of estimating a component of the state vector, i.e. a linear continuous functional of the state vector (see~\cref{p:scalar_mest_hatphi}). For a more general case of estimating the entire solution, or a linear operator of the solution,  a non-trivial lower bound for the minimax error is found (see~\cref{t:vector_mest}). Finally, it is demonstrated that the minimax estimate is asymptotically exact under mild assumptions on observations and bounding sets as suggested in~\cref{p:convergence_boundedG}. An interesting relationship between the proposed estimates and robust pseudoinversion of compact operators is revealed in~\cref{sec:numer-exper}: it turns out that, for $T\to\infty$ the minimax estimate converges to the solution of the least-squares problem $\|Kx-K\varphi_{true}\|_0\to\min_x$ with the minimal norm. A promising research direction would be to use the proposed results to design stochastic differentiators, i.e. algorithms which can compute time/partial derivatives of the noisy signals.

\end{document}